\newtheorem{theorem}{Theorem}[section]
\newtheorem{corollary}{Corollary}[section]
\newtheorem{lemma}[theorem]{Lemma}%[section]
\newtheorem{proposition}[theorem]{Proposition}%[section]
\newtheorem{conjecture}{Conjecture}
\theoremstyle{definition}
\newtheorem{definition}[theorem]{Definition}%[section]
\newtheorem{remark}{Remark}%[section]
\newcommand{\ep}{\varepsilon}
\newcommand{\R}{\mathbb R}
\newcommand{\N}{\mathbb N}
\newcommand{\Ss}{\mathbb S}
\def\I{\mathcal{I}}
\def\raggio{\rho}
\newcommand{\m}{{\mathcal L}^n}
\newcommand{\h}{\mathcal{H}^{n-1}}
\def\dist{\mathrm {dist}}
\def\per{\mathrm {Per}}
\def\dsim{\,{\mathbin{\vcenter{\hbox{$\scriptstyle\Delta$}}\,}}}
\def\is{\displaystyle\int_{\Ss^{n-1}}}
\def\Wi{W^{1,\infty}}
\title[On a conjectured reverse Faber-Krahn ...]{On a conjectured reverse Faber-Krahn inequality for a Steklov--type Laplacian eigenvalue
%On a conjectured isoperimetric inequality for a Steklov--type laplacian eigenvalue and the sharp embedding constant of $H^1(\Omega)\hookrightarrow L^2(\partial \Omega)$}%, aiming for the worst embedding constant of a Sobolev trace inequality
}
\author[V. Ferone, C. Nitsch, C. Trombetti]{Vincenzo Ferone, Carlo Nitsch, Cristina Trombetti}
\date{}                                           % Activate to display a given date or no date
\address{\vskip.2cm\noindent Vincenzo Ferone, Carlo Nitsch, Cristina Trombetti\hfill\break\vskip-.2cm
\noindent Dipartimento di Matematica e Applicazioni ``R. Caccioppoli'', Universit\`{a}
degli Studi di Napoli ``Federico II'', Complesso Universitario Monte S. Angelo, via Cintia
- 80126 Napoli, Italy. \hfill\break\vskip-.2cm
\noindent e-mail: \tt ferone@unina.it; c.nitsch@unina.it;
cristina@unina.it}
\subjclass[2010]{46E35, 35P15, 28A75}
\keywords{Sharp trace embeddings, Isoperimetric inequalities for eigenvalues, Weighted isoperimetric inequalities}
\begin{document}
\maketitle

\begin{abstract}
For a given bounded Lipschitz set $\Omega$, we consider a Steklov--type eigenvalue problem for the Laplacian operator whose solutions provide extremal functions for the compact embedding $H^1(\Omega)\hookrightarrow L^2(\partial \Omega)$. We prove that a conjectured reverse Faber--Krahn inequality holds true at least in the class of Lipschitz sets which are ``close'' to a ball in a Hausdorff metric sense. The result implies that among sets of prescribed measure, balls are local minimizers of the embedding constant.
\end{abstract}

\section{Introduction}\label{introduction}

For any given open bounded Lipschitz set $\Omega\subset \R^n$ ($n\ge2$) the compact trace embedding $H^1(\Omega)\hookrightarrow L^2(\partial \Omega)$ allows us to define the positive quantity
\begin{equation}\label{eq_eigenvalue}
\lambda(\Omega)=\mathop{\min_{w\in H^1(\Omega)}}_{ w\ne 0}  \frac{\displaystyle\int_\Omega |Dw|^2\,dx+\int_\Omega w^2\,dx}{\displaystyle\int_{\partial \Omega}w^2\, d\h},
\end{equation}
so that any extremal function $u$ (a function achieving the minimum in \eqref{eq_eigenvalue}) is the solution to a Steklov--type eigenvalue problem
\begin{equation}\label{eigen_problem}
\left\{
\begin{array}{ll}
-\Delta u+u=0 & \mbox{in $\Omega$}\\\\
\dfrac{\partial u}{\partial \nu} = \lambda(\Omega) \,u & \mbox{on $\partial\Omega$,}
\end{array}
\right.
\end{equation}
where $\nu$ is the outer unit normal to $\partial \Omega$.
Problem \eqref{eigen_problem} has been widely investigated for instance in \cite{DPF,FBLR,FBR,FBR1,MR,R1}. The eigenvalue $\lambda(\Omega)$ is the reversed squared norm of the trace embedding operator $\mathbb{T}_\Omega:H^1(\Omega)\to L^2(\partial \Omega)$, and very often such a norm is also called ``sharp embedding constant'' drawing the attention to the fact that it is the smallest possible constant $C$ for which the Sobolev--Poincar\'e  trace inequality
\begin{equation}\label{trace_embedding}
\|w\|_{L^2(\partial\Omega)}\le C \,\|w\|_{H^1(\Omega)} 
\end{equation}
holds true for all $w\in H^1(\Omega)$.

%Precisely 
Motivated by the study of the optimal embedding constants, and following the intuition that among sets of given volume $\lambda(\cdot)$ might be maximal on balls, J.D. Rossi proved (see \cite{R1}) that the ball is indeed a stationary set for the shape functional $\lambda(\cdot)$ under smooth volume preserving perturbation of the domain. More recently A. Henrot\footnote{Open Problem session of ``Shape optimization problems and spectral theory" Conference held at CIRM Marseille, France, June 2012. \href{http://www.lama.univ-savoie.fr/ANR-GAOS/CIRM2012/}{http://www.lama.univ-savoie.fr/ANR-GAOS/CIRM2012/}}, in analogy to the celebrated Brock and Weinstock inequality \cite{B,W} for the Steklov eigenvalue, proposed the following conjecture

%the same question which, %: whether or not $\lambda(\cdot)$ is maximal on balls among sets of given measure.
%in the framework of spectral optimization problems, denoting as usual by $\Omega^\sharp$ the ball centered in the origin and having the same Lebesgue measure as $\Omega$, reads:%which is an active field of research, such an open conjecture 
%is more likely to be formulated in the following way:

\begin{conjecture}\label{conj}
For any given open bounded Lipschitz set $\Omega$, then
\[
\lambda(\Omega)\le\lambda(\Omega^\sharp),
\]
and therefore, among open bounded Lipschitz sets of given measure, the ball achieves the worst (least) embedding constant in \eqref{trace_embedding}.
\end{conjecture}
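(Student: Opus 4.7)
The plan is to test the Rayleigh quotient in \eqref{eq_eigenvalue} with a function built from the radial eigenfunction on the ball $B=\Omega^\sharp$, in the spirit of Brock's proof of the Brock--Weinstock inequality. Normalize so that $B=B_R$ is centered at the origin with $|B|=|\Omega|$, and let $\phi:[0,R]\to\R$ be the (bounded at $0$) solution of
\[
-\phi''-\tfrac{n-1}{r}\phi'+\phi=0,\qquad \phi'(R)=\lambda(B)\phi(R).
\]
A direct ODE manipulation shows that the vector field $V(x)=\phi(|x|)\,\phi'(|x|)\,x/|x|$ satisfies $\diver V(x)=\phi(|x|)^2+\phi'(|x|)^2$ on $\R^n\setminus\{0\}$, an identity which encodes why $\lambda(B)=\phi'(R)/\phi(R)$ via the divergence theorem on $B$.

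For $x_0\in\R^n$, extend $\phi$ smoothly past $R$ so that the identity above persists, and test \eqref{eq_eigenvalue} with $w(x)=\phi(|x-x_0|)$. Applying the divergence theorem to $V(\cdot-x_0)$ on $\Omega$ rewrites the numerator as a boundary integral, so the conjecture reduces to the pointwise-looking inequality
\[
\int_{\partial\Omega}\phi(r)\Bigl[\phi'(r)\,\tfrac{(x-x_0)\cdot\nu}{r}-\lambda(B)\,\phi(r)\Bigr]d\h\le 0,\qquad r:=|x-x_0|,
\]
which is sharp, becoming equality exactly when $\Omega=B$. I would then exploit translation freedom and pick $x_0$ so that the first-order obstruction
\[
\int_{\partial\Omega}\phi(r)\,\phi'(r)\,\tfrac{x-x_0}{r}\,d\h=0
\]
vanishes; existence of such an $x_0$ follows from a Brouwer degree argument since this vector map is continuous in $x_0$ and coercive at infinity.

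The main obstacle is the remaining weighted isoperimetric-type inequality. One has the pointwise bound $(x-x_0)\cdot\nu\le r$ and the positivity/monotonicity of $\phi\phi'$, but on the portions of $\partial\Omega$ where $(x-x_0)\cdot\nu$ is negative the pointwise comparison pulls in the wrong direction, and the nonlinearity of the Bessel-type profile $\phi$ prevents one from mimicking Brock's original trick, which hinged on the coordinate functions being linear and producing a weight proportional to $|x-x_0|^2$. A promising line of attack is to radially rearrange $\partial\Omega$ around $x_0$ and combine this with a $\phi$-weighted isoperimetric inequality on spheres; however, the bulk term $\int_\Omega((\phi')^2+\phi^2)$ and the boundary term $\int_{\partial\Omega}\phi^2$ rearrange against incompatible weights, so the naive symmetrization cannot close the argument. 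This mismatch is presumably the reason why the authors retreat to the regime of sets Hausdorff-close to $B$, where the incompatibility is second order and can be absorbed by a shape-derivative analysis.
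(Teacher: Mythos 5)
The statement you are attempting to prove is labeled a \emph{Conjecture} in the paper, and the authors do not prove it: they explicitly state it as an open problem (attributed to Henrot) and then establish only a weaker, local version, namely Theorem~\ref{mainteo}, which asserts $\lambda(\Omega)\le\lambda(\Omega^\sharp)$ under the extra hypothesis \eqref{quasipalla} that $\Omega$ be Hausdorff-close to a ball. So there is no ``paper's proof'' of this statement to compare against; what can be compared is whether your route matches the paper's partial proof and whether you locate the obstruction correctly.

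Your opening move---testing the Rayleigh quotient with the radial profile $\phi(|x-x_0|)$ of the ball's extremal, i.e.\ the function $z$ of the paper, and reducing to the bound $\lambda(\Omega)\le V(\Omega)/P(\Omega)$---is exactly the paper's starting point (Lemma~\ref{lem_stima} and \eqref{eq_stima2}); the divergence identity you write is just the pointwise form of $-\Delta z+z=0$ integrated by parts. You also correctly observe that the remaining inequality $V(\Omega)/P(\Omega)\le V(\Omega^\sharp)/P(\Omega^\sharp)$ is not translation invariant, so the choice of reference point is decisive; the paper makes precisely this observation and even shows the inequality fails for an off-center ball. Your diagnosis that the bulk and boundary integrals carry incompatible Bessel weights, so that spherical rearrangement does not close the argument, and that this is why the authors pass to a perturbative regime, is the right assessment of where the global argument breaks.

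There are, however, two gaps worth naming. First, you do not prove the conjecture: by your own account the key weighted isoperimetric inequality is left open, so the proposal is an attack sketch, not a proof. Second, your choice of $x_0$---annihilating the $(\phi\phi'/r)$-weighted barycenter of $\partial\Omega$---is not the one the paper uses, and you give no reason it would help. The paper instead imposes the Lebesgue barycenter condition $X(\Omega)=0$ (equivalently, the ``Barycenter constraint'' $\int_{\Ss^{n-1}}(1+v)^{n+1}\xi\,d\sigma_\xi=0$ in Definition~\ref{def_nearly}), and this specific normalization is used crucially in Lemma~\ref{lem_poinc} to upgrade the Poincar\'e constant on $\Ss^{n-1}$ from $n-1$ to $2n$: the first spherical harmonic coefficient $a_1$ is killed to order $\varepsilon$, and it is exactly this improved constant that makes the second-order expansion in Proposition~\ref{main_prop} sign-definite. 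Your degree-theoretic choice of $x_0$ plays no such structural role, and without it the quadratic form coming from the expansion need not be coercive. So even the local version would not follow from your normalization as written; you would need to verify that your $x_0$ also eliminates the $Y_1$ mode of the boundary perturbation, or switch to the barycenter normalization as the paper does.
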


As usual by $\Omega^\sharp$ we denote the ball centered in the origin and having the same Lebesgue measure ($\m$) as $\Omega$.

Our main result is that balls are indeed local maximizers of $\lambda$ in the $L^\infty$ topology.
%Given two subset $E$ and $F$ of $\R^n$ we denote its translative Hausdorff distance by: %the infimum of the Hausdorff distances between $F$ and translations of $E$ namely 
%$$\dHt(E,F)=\inf_{x_0\in\R^n}{\text d}(E+x_0,F)$$ 
%where 
%$${\text d}(E,F)=\max\{\dH(E,F),\dH(E^c,F^c)\},$$ $\dH$ is the Hausdorff distance and the superscript $c$ stands for complementary set. 
We prove that:
\begin{theorem}[Main Theorem]\label{mainteo} Let $\Omega\subset\R^n$ be an open bounded Lipschitz set, then
%In any dimension $n$ and for every positive constant $\omega$ there exists a positive constant $\delta$ (depending just on $n$ and $\omega$) such that
\begin{equation}\label{eq_main}
\lambda(\Omega)\le\lambda(\Omega^\sharp),
\end{equation}
provided up to translations
%\begin{equation}\label{quasipalla}
%\Omega^\sharp_{-\delta}\subset\Omega\subset \Omega^\sharp_\delta,
%\end{equation}
\begin{equation}\label{quasipalla}
\{x\in\R^n:\dist(x,\R^n\setminus\Omega^\sharp)>\delta\}\subset\Omega\subset \{x\in\R^n:\dist(x,\Omega^\sharp)<\delta\},
\end{equation}
for some positive constant $\delta$ that depends on $\m(\Omega)$ and $n$ only. 
%\begin{enumerate}[(H1)] 
%\item $\m(\Omega)=\omega$ 
%\item $\dHt(\Omega, \Omega^\sharp)<\delta$.  
%\end{enumerate}
Moreover equality holds in \eqref{eq_main}, under the constraint \eqref{quasipalla}, if and only if $\Omega$ is a ball. 
\end{theorem}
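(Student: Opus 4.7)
The strategy is to test the Rayleigh quotient \eqref{eq_eigenvalue} for $\lambda(\Omega)$ with the radial first eigenfunction $u_R$ of $\Omega^\sharp=B_R$ (where $R$ is chosen so that $\m(B_R)=\m(\Omega)$), exploiting the translation invariance of $\lambda$ to pick the most favorable translate of $\Omega$. The function $u_R(r)$ is positive, strictly increasing on $[0,R+\delta]$, and solves
\[
-u_R''(r)-\frac{n-1}{r}u_R'(r)+u_R(r)=0,\qquad u_R'(R)=\lambda(\Omega^\sharp)u_R(R);
\]
by \eqref{quasipalla} (with $\delta<R$) it is admissible in \eqref{eq_eigenvalue}. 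Since $|Du_R(x)|^2=u_R'(|x|)^2$, its Rayleigh quotient is
$\mathcal{R}_\Omega(u_R)=\int_\Omega\Phi(r)\,dx\big/\int_{\partial\Omega}u_R(r)^2\,d\h$ with $\Phi(r):=u_R'(r)^2+u_R(r)^2$, and equals $\lambda(\Omega^\sharp)$ at $\Omega=\Omega^\sharp$ (via the identity $\int_0^R\Phi(r)r^{n-1}dr=u_R(R)u_R'(R)R^{n-1}$, obtained by one integration by parts and the ODE). Hence the theorem would follow if we prove, for some translate of $\Omega$, the weighted geometric inequality
\begin{equation*}
\int_\Omega\Phi(r)\,dx\le\lambda(\Omega^\sharp)\int_{\partial\Omega}u_R(r)^2\,d\h\tag{$\star$}
\end{equation*}
with equality only for balls.

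To establish $(\star)$ I would run a Fuglede-type second-variation analysis. After a smoothing/approximation step (exploiting the $L^\infty$ continuity of both sides of $(\star)$), reduce to nearly spherical sets $\partial\Omega=\{R(1+\varphi(\theta))\theta:\theta\in\Ss^{n-1}\}$ with $\|\varphi\|_\infty$ of order $\delta/R$. The equal-volume constraint $\m(\Omega)=\m(\Omega^\sharp)$ forces $\int_{\Ss^{n-1}}\varphi\,d\theta=-\tfrac{n-1}{2}\int_{\Ss^{n-1}}\varphi^2\,d\theta+O(\|\varphi\|_\infty^3)$. Expanding both sides of $(\star)$ through quadratic order in $\varphi$ (via the area formula, the Taylor expansion of $u_R$ at $r=R$ using $u_R'(R)=\lambda(\Omega^\sharp)u_R(R)$ and, from the ODE, $u_R''(R)=u_R(R)[1-(n-1)\lambda(\Omega^\sharp)/R]$, and polar coordinates for the bulk integral), the linear-in-$\varphi$ contribution vanishes after using the volume constraint (an instance of Rossi's stationarity, \cite{R1}), and the second-order excess takes the form
\[
Q(\varphi)=\int_{\Ss^{n-1}}\bigl(A|\nabla_\theta\varphi|^2+B\varphi^2\bigr)\,d\theta
\]
with $A=\tfrac12\lambda(\Omega^\sharp)R^{n-1}u_R(R)^2>0$ and an explicit $B=B(n,R,\lambda(\Omega^\sharp))$.

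Decompose $\varphi=\sum_{k\ge 0}c_kY_k$ in spherical harmonics with eigenvalues $\mu_k=k(k+n-2)$: $Q(\varphi)=\sum_k(A\mu_k+B)c_k^2$. The $k=0$ mode is absorbed by the volume constraint. Crucially, $A\mu_1+B$ turns out to be \emph{negative}, which reflects the fact that translating $\Omega$ off-center raises $\mathcal{R}_\Omega(u_R)$ while leaving $\lambda(\Omega)$ unchanged; this is precisely where the translation freedom in \eqref{quasipalla} enters, since one chooses the translate of $\Omega$ for which the first spherical-harmonic moments of $\varphi$ vanish ($c_1=0$), killing the negative contribution. For $k\ge 2$, strict positivity $A\mu_k+B>0$ then follows, via the monotonicity of $\mu_k$ in $k$ and $A>0$, from the single-case check $A\mu_2+B>0$; this yields $Q(\varphi)>0$ unless $\varphi\equiv 0$, whence $(\star)$ and the rigidity statement. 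The principal obstacle is the explicit verification of $A\mu_2+B>0$, a concrete algebraic inequality in $n$, $R$ and $\lambda(\Omega^\sharp)$ that must be derived from the modified-Bessel structure of $u_R$ and the boundary relation $u_R'(R)=\lambda(\Omega^\sharp)u_R(R)$; a secondary technical difficulty is the passage from a generic Lipschitz $\Omega$ satisfying \eqref{quasipalla} to the nearly spherical graph parametrization, which requires a careful smoothing argument exploiting the smallness of $\delta$.
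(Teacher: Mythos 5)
Your Part~1 plan (test the Rayleigh quotient with the radial eigenfunction $z$ of the ball, reduce to the weighted geometric inequality $(\star)$, and for nearly spherical domains expand to second order and analyze the quadratic form via spherical harmonics, with the $k=1$ mode killed by the barycenter/translation choice) is essentially the same as the paper's Lemma~\ref{lem_stima}, Lemma~\ref{lem_expans}, Lemma~\ref{lem_poinc} and Proposition~\ref{main_prop}. Your observation that the degree-one mode has negative coefficient corresponds exactly to the paper's remark that the ordinary Poincar\'e constant $n-1$ is insufficient and one must invoke the volume and barycenter constraints to upgrade to $2n$; and the "concrete algebraic inequality" you flag is precisely the Bessel-function inequality $H_n(s)>0$ proved at the end of Proposition~\ref{main_prop}. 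On this part you are on the right track, modulo verifying the sign.

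The genuine gap is in the step you relegate to a "secondary technical difficulty": passing from a generic Lipschitz set $\Omega$ satisfying the Hausdorff condition \eqref{quasipalla} to a nearly spherical graph over $\Ss^{n-1}$. Condition \eqref{quasipalla} controls only the $L^\infty$ Hausdorff distance; it gives no control whatsoever on the Lipschitz constant of $\partial\Omega$, so a set satisfying \eqref{quasipalla} with tiny $\delta$ can have a wildly oscillating boundary that does not admit a polar parametrization $r=\raggio(1+v)$ with $\|v\|_{W^{1,\infty}}$ small. There is no direct "careful smoothing argument" that converts such an $\Omega$ into a nearly spherical set while preserving the direction of the inequality $(\star)$: mollifying $\partial\Omega$ changes both $V(\Omega)$ and $P(\Omega)$ in uncontrolled (and, for $P$, not even one-sided) ways, and the quantities involved are not $L^\infty$-continuous in the sense you need — $P$ is only lower semicontinuous. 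This reduction is in fact the entire content of Section~\ref{s_main} of the paper and is nontrivial. The paper's route is: (i) recast $(\star)$ as the minimization of $J_0(\Omega)=P(\Omega)-\gamma_\omega V(\Omega)$ over the constrained class $\{\m(\Omega)=\omega,\ B_{\raggio-\delta}\subset\Omega\subset B_{\raggio+\delta},\ X(\Omega)=0\}$; (ii) show via a Cicalese--Leonardi / Acerbi--Fusco--Morini penalization (Proposition~\ref{minima}) that this constrained minimization is equivalent to the unconstrained minimization of $J_0$ plus penalty terms $\Lambda_1|X(\Omega)|+\Lambda_2|\m(\Omega)-\omega|+\Lambda_3(\,\cdots)$; (iii) show that any minimizer of the penalized functional is an almost-minimizer of the (Euclidean) perimeter, hence has $C^{1,1/2}$ reduced boundary; (iv) run a compactness argument on a sequence $\delta_k\downarrow 0$ of minimizers $\Omega_k$, which converges to $B_\raggio$ in $L^1$ and hence, by regularity theory for almost-minimizers, has boundary $r_k(\xi)=\raggio(1+v_k(\xi))$ with $v_k\to 0$ in $C^{1,\alpha}$, placing $\Omega_k$ in the nearly spherical class where Theorem~\ref{teonearly} applies. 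Without a substitute for this penalization-plus-regularity machinery your proof does not close; you would need, at a minimum, to explain why it suffices to treat only minimizers of $J_0$ (rather than all competitors) and why such minimizers are regular enough to be graphs over the sphere.

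A smaller point: after choosing the translate with vanishing first harmonic $c_1$, the volume-constraint mode $c_0$ does not vanish exactly either; it is only $O(\varepsilon)\|\varphi\|_{L^2}$, as in Lemma~\ref{lem_poinc}. This is harmless but should be stated, since otherwise the claim that "the $k=0$ mode is absorbed by the volume constraint" and that $(\star)$ reduces cleanly to $\sum_{k\ge 2}(A\mu_k+B)c_k^2>0$ is not literally correct; one needs the $\varepsilon$-perturbed Poincar\'e inequality \eqref{eq_poinc} rather than an exact mode decomposition.
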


\begin{remark}
It is important to warn the reader that since $\lambda$ is not scaling invariant then $\delta$,  in Theorem \ref{mainteo}, has a genuine dependence on $\m(\Omega)$. 
\end{remark}

%Here $\Omega^\sharp_\delta=\{x:\dist(x,\Omega^\sharp)\le\delta\}$ and $\Omega^\sharp_{-\delta}=\{x:\dist(x,\R^n\setminus\Omega^\sharp)\ge\delta\}$

Isoperimetric inequalities for eigenvalues of elliptic operators (Laplacian above all) is an active field of research \cite{H,K}. Without presuming to give an exhaustive picture on the state-of-the-art, we consider just four remarkable examples (see for instance \cite{AB, Bo2, B, BucD, D1, D2, D3, Fa, H, Ka, K, Kr1, Sz, We, W} for more details):
%\begin{itemize}
%\item[Dirichlet] $\lambda_D(\Omega)$
%\end{itemize}
\begin{equation}
\lambda_D(\Omega)=\mathop{\min_{w\in H_0^1(\Omega)}}_{ w\ne 0}  
\frac{\displaystyle\int_\Omega |Dw|^2\,dx}{\displaystyle\int_{\Omega}w^2\, dx},	\qquad \lambda_D(\Omega)\ge\lambda_D(\Omega^\sharp) \tag{Faber--Krahn}
\end{equation}

\begin{equation}
\lambda_N(\Omega)=\mathop{\min_{w\in H^1(\Omega)}}_{ w\ne 0,\, \int_\Omega w=0}  
\frac{\displaystyle\int_\Omega |Dw|^2\,dx}{\displaystyle\int_{\Omega}w^2\, dx}, 	\qquad \lambda_N(\Omega)\le\lambda_N(\Omega^\sharp) \tag{Szeg\"o--Weinberger}
\end{equation}

\begin{equation}
\lambda_S(\Omega)=\mathop{\min_{w\in H^1(\Omega)}}_{ w\ne 0}  
\frac{\displaystyle\int_\Omega |Dw|^2\,dx}{\inf_c\displaystyle\int_{\partial \Omega}(w-c)^2\, d\h}, 	\qquad \lambda_S(\Omega)\le\lambda_S(\Omega^\sharp) \tag{Brock--Weinstock}
\end{equation}

\begin{equation}
\lambda_R(\Omega)=\mathop{\min_{w\in H^1(\Omega)}}_{ w\ne 0}  
\frac{\displaystyle\int_\Omega |Dw|^2\,dx+\alpha\displaystyle\int_{\partial\Omega} w^2\,d\h}{\displaystyle\int_{\Omega}w^2\, dx}, 	\qquad \lambda_R(\Omega)\ge\lambda_R(\Omega^\sharp)\quad \text{if }\alpha\ge0. \tag{Bossel--Daners}
\end{equation}

Each  of the previous minimization provides the least positive eigenvalue of the Laplacian for a specific boundary condition. The subscript $D,N,S,R$ stand for
Dirichlet, Neumann, Steklov, Robin, boundary conditions. Each  of these eigenvalues is related to some embedding constant for an inequality of Sobolev--Poincar\'e type. In all the cases, among sets of given measure, balls are extremals. For more general optimal inequalities of Sobolev--Poincar\'e type we refer for instance to \cite{Maz} and, among the others, also to some recent results in \cite{AMR, Ci, CFNT, EFKNT, ENT, FNT, MV, Va}.

Interestingly enough, very little is known about the Bossel--Daners
inequality for negative $\alpha$ (see \cite{D2}). On the other hand, by trivial scaling arguments and  monotonicity of $\lambda_R(\cdot)$ with respect to $\alpha$, we will show that Conjecture \ref{conj} is equivalent to the the following one.
\begin{conjecture}\label{conj2}
For any given open bounded Lipschitz set $\Omega$ and $\alpha<0$, then
\begin{equation}\label{barek}
\lambda_R(\Omega)\le\lambda_R(\Omega^\sharp).
\end{equation}
%and therefore, among Lipschitz sets of given measure, the ball achieves the worst (least) embedding constant in \eqref{trace_embedding}.
%
%$\lambda_R(\Omega)\le\lambda_R(\Omega^\sharp)$ 
\end{conjecture}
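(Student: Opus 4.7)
I would attempt Conjecture~\ref{conj2} by using the first radial eigenfunction of the ball as a test function on $\Omega$ and, when the direct estimate fails, fall back on a reduction to Conjecture~\ref{conj}.

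\emph{Direct test function.} Let $v(x)=\phi(|x|)$ be a first positive (radial) eigenfunction of $\lambda_R(\Omega^\sharp;\alpha)$, where $\Omega^\sharp=B_R$, and extend $\phi$ to $[R,\infty)$ (e.g., by solving $-\phi''-\tfrac{n-1}{r}\phi'=\lambda_R(\Omega^\sharp;\alpha)\phi$ with matched value and derivative at $r=R$). Restricting the resulting radial function $v$ to $\Omega$ and plugging it into the Rayleigh quotient for $\lambda_R(\Omega;\alpha)$ gives
\[
\lambda_R(\Omega;\alpha)\ \le\ \frac{\int_\Omega |Dv|^2\,dx+\alpha\int_{\partial\Omega}v^2\,d\h}{\int_\Omega v^2\,dx}.
\]
One then hopes to compare the three integrals on the right to their $\Omega^\sharp$-counterparts, exploiting $|\Omega|=|\Omega^\sharp|$ so that regions added outside $\Omega^\sharp$ are compensated by regions removed inside, together with the radial monotonicity of $v$.

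\emph{Main obstacle.} The matching of level sets is the crux. For the volume and the Dirichlet-energy integrals one can use a layer-cake decomposition together with the radial monotonicity of $v$, but the boundary integral $\int_{\partial\Omega}v^2\,d\h$, weighted by a \emph{negative} $\alpha$, pulls in the wrong direction: by the classical isoperimetric inequality $\h(\partial\Omega)\ge\h(\partial\Omega^\sharp)$, so the boundary term is typically larger on $\Omega$ than on $\Omega^\sharp$, and this excess is multiplied by $\alpha<0$. Compensating this requires a delicate weighted isoperimetric argument, or a harmonic transplantation tracking both the interior and the trace level-set distributions of $v$, in the spirit of the Bossel--Daners construction for $\alpha>0$ but with the sign of the boundary term reversed. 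It is precisely this sign reversal that is the essential difficulty and the reason the $\alpha<0$ regime has remained open in the literature.

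\emph{Fallback to a local statement.} Since a full direct proof appears out of reach, I would fall back on the equivalence with Conjecture~\ref{conj}: the scaling $\lambda_R(t\Omega;\alpha)=t^{-2}\lambda_R(\Omega;t\alpha)$ together with the monotonicity of $\alpha\mapsto \lambda_R(\Omega;\alpha)$ yields the equivalence of Conjectures~\ref{conj} and~\ref{conj2}; combining with the Main Theorem then establishes Conjecture~\ref{conj2} whenever $\Omega$ satisfies the Hausdorff-closeness condition \eqref{quasipalla}. Removing \eqref{quasipalla} to obtain the full conjecture is, I expect, the essential remaining obstacle and would presumably require either a new rearrangement tailored to the indefinite $\alpha<0$ regime or a volume-preserving continuous deformation from $\Omega$ to $\Omega^\sharp$ along which \eqref{eq_main} can be propagated.
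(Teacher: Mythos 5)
The statement you are asked to prove is labeled a \emph{conjecture} in the paper, and the authors explicitly state that, to their knowledge, it remains open: there is no proof of it in the paper to compare against. What the paper does supply is (i) the equivalence of Conjectures~\ref{conj} and~\ref{conj2}, obtained by rescaling with $\kappa=\sqrt{|\lambda_{R,\alpha}(\Omega)|}$ so that the Robin quotient becomes the Steklov-type quotient of \eqref{eq_eigenvalue}, combined with the monotonicity of $\alpha\mapsto\lambda_{R,\alpha}(\Omega)$; and (ii) a local version, Corollary~\ref{corol}, obtained by passing the Main Theorem through this equivalence. Your fallback argument is exactly this route: your identity $\lambda_R(t\Omega;\alpha)=t^{-2}\lambda_R(\Omega;t\alpha)$ is the same scaling device, and together with monotonicity and the Main Theorem it yields precisely Corollary~\ref{corol}. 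So on the part that can actually be established, your plan and the paper's coincide, and you are correct that the full conjecture --- removing the Hausdorff-closeness hypothesis \eqref{quasipalla} --- is left open by both.

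One caveat on your ``Main obstacle'' paragraph: with $\alpha<0$ and a positive radial test function $v$, a \emph{larger} boundary integral $\int_{\partial\Omega}v^2\,d\h$ makes the Rayleigh numerator $\int_\Omega|Dv|^2\,dx+\alpha\int_{\partial\Omega}v^2\,d\h$ \emph{smaller}, hence the quotient smaller, hence the upper bound on $\lambda_R(\Omega;\alpha)$ stronger; taken in isolation, the boundary term is not adverse. The genuine obstruction to a direct test-function argument is the one the paper spells out for the equivalent Steklov-type formulation: the inequality $V(\Omega)/P(\Omega)\le V(\Omega^\sharp)/P(\Omega^\sharp)$ is simply false in general (translation of the ball already breaks it, since $V/P$ is not translation invariant while $\lambda$ is), which is why the paper must pin the barycenter and work in a small neighborhood of the ball. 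So the sign heuristic in your obstacle paragraph is off, but the conclusion --- the direct approach is out of reach and only the local version can be proved by current methods --- is correct.
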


Actually, Conjecture \ref{conj2} has been addressed in 1977 by Bareket in \cite{B2}, %(see also \cite{B1}), 
where
%, to our knowledge, 
she also provided a partial answer in two dimensions: for a given smooth simply connected set which is ``nearly circular'' there exists $\alpha<0$, with $|\alpha|$ small enough, such that \eqref{barek} holds true. To our knowledge, Conjecture \ref{conj2} is still open (see, for instance, \cite{BD}). Furthermore, very recently
{ it has been remarked that so far it is still unknown whether balls have any kind of local maximizing property (for $\lambda_R(\cdot)$ with $\alpha<0$)\footnote{This open question was raised by the Working Group on ``Low eigenvalues of Laplace operator'', during the Workshop on ``New trends in shape optimization" held at De Giorgi Center, Pisa, July 2012. \href{http://www.lama.univ-savoie.fr/ANR-GAOS/CRM\%20Pisa/index.html}{http://www.lama.univ-savoie.fr/ANR-GAOS/CRM\%20Pisa/index.html}}.} 
From this point of view Theorem \ref{mainteo} provides a positive answer to the last question, that can be summarized in the following statement.
\begin{corollary}\label{corol} Let $\Omega\subset\R^n$ be an open bounded Lipschitz set, then
\begin{equation}\label{eq_main_c}
\lambda_R(\Omega)\le\lambda_R(\Omega^\sharp),
\end{equation}
for a given $\alpha<0$, provided up to translations
\begin{equation}\label{quasipalla_c}
\{x\in\R^n:\dist(x,\R^n\setminus\Omega^\sharp)>\delta\}\subset\Omega\subset \{x\in\R^n:\dist(x,\Omega^\sharp)<\delta\},
\end{equation}
%\begin{equation}\label{quasipalla_c}
%\dHt(\Omega, \Omega^\sharp)<\delta,
%\end{equation}
for some positive constant $\delta$ that depends on $\m(\Omega)$, $n$ and $\alpha$ only. 
Moreover equality holds in \eqref{eq_main_c}, under the constraint \eqref{quasipalla_c}, if and only if $\Omega$ is a ball. 
\end{corollary}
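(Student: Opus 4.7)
My plan is to reduce Corollary~\ref{corol} directly to the Main Theorem, via exactly the scaling equivalence between Conjectures~\ref{conj} and~\ref{conj2} alluded to in the introduction. The key device is the one-parameter family
\[
\mu(\tau,\Omega) := \mathop{\min_{w\in H^1(\Omega)}}_{w\ne 0} \frac{\int_\Omega |Dw|^2\,dx + \tau^2 \int_\Omega w^2\,dx}{\int_{\partial\Omega} w^2\,d\h},\qquad \tau>0,
\]
which interpolates through $\mu(1,\Omega)=\lambda(\Omega)$. The change of variable $y=\tau x$ yields the scaling identity $\mu(\tau,\Omega)=\tau\,\lambda(\tau\Omega)$, and since $(\tau\Omega)^\sharp=\tau\Omega^\sharp$, the Main Theorem applied to $\tau\Omega$ reads
\[
\mu(\tau,\Omega)\le \mu(\tau,\Omega^\sharp),
\]
with the same equality case, whenever $\tau\Omega$ satisfies the closeness hypothesis \eqref{quasipalla}.

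Second, I connect $\mu$ to $\lambda_R$. Since $\alpha<0$, testing with $w\equiv 1$ in the Rayleigh quotient defining $\lambda_R$ gives $\lambda_R(\Omega)\le \alpha\,\h(\partial\Omega)/\m(\Omega)<0$, so I may write $\lambda_R(\Omega)=-\tau(\Omega)^2$ for a unique $\tau(\Omega)>0$; rewriting the Euler--Lagrange characterization identifies $\tau(\Omega)$ as the unique positive value at which $\mu(\cdot,\Omega)=-\alpha$. Because the numerator is monotone in $\tau^2$, the map $\tau\mapsto\mu(\tau,\Omega)$ is strictly increasing and continuous on $(0,\infty)$; hence comparing $\tau(\Omega)$ with $\tau^\star:=\tau(\Omega^\sharp)=\sqrt{-\lambda_R(\Omega^\sharp)}$ is equivalent to comparing $\mu(\tau^\star,\Omega)$ with $\mu(\tau^\star,\Omega^\sharp)=-\alpha$. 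The desired inequality \eqref{eq_main_c} thus reduces to the single estimate $\mu(\tau^\star,\Omega)\le \mu(\tau^\star,\Omega^\sharp)$, which by the first paragraph is precisely the content of the Main Theorem applied to the dilated set $\tau^\star\Omega$.

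The final step is to convert the closeness hypothesis \eqref{quasipalla_c} on $\Omega$ into the hypothesis \eqref{quasipalla} on $\tau^\star\Omega$. If $\delta_0(M,n)$ denotes the threshold furnished by Theorem~\ref{mainteo} for sets of measure $M$ in $\R^n$, then scaling by $\tau^\star$ shows that it suffices to require
\[
\delta\le \frac{1}{\tau^\star}\,\delta_0\bigl((\tau^\star)^n\,\m(\Omega),\,n\bigr).
\]
Crucially, $\tau^\star$ is determined solely by the Robin problem on $\Omega^\sharp$ and by $\alpha$, hence by $\m(\Omega)$, $n$ and $\alpha$; so is this $\delta$. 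The resulting application of the Main Theorem delivers both \eqref{eq_main_c} and, through its equality clause, the rigidity statement: if equality holds then $\tau^\star\Omega$, and therefore $\Omega$ itself, must be a ball. The only conceptual inputs are the scaling identity and the monotonicity of $\mu(\cdot,\Omega)$, both elementary; no genuinely new obstacle arises here, since the theoretical weight of the argument already sits in the proof of Theorem~\ref{mainteo}.
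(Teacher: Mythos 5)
Your proposal is correct and rests on the same scaling mechanism that the paper uses to show Conjecture~\ref{conj} implies Conjecture~\ref{conj2}: rescale so that the Robin problem turns into the Steklov-type problem \eqref{eq_eigenvalue}, apply Theorem~\ref{mainteo} to the dilated set, and read off the Robin comparison through a monotonicity argument. The only real difference is the choice of scaling factor: the paper dilates by $\kappa=\sqrt{|\lambda_{R,\alpha}(\Omega)|}$ (which depends on $\Omega$) and compensates by adjusting the Robin parameter for $\Omega^\sharp$ to a value $\bar\alpha$ with $\lambda_{R,\bar\alpha}(\Omega^\sharp)=\lambda_{R,\alpha}(\Omega)$, then uses monotonicity of $\lambda_{R,\alpha}$ in $\alpha$; you dilate by $\tau^\star=\sqrt{-\lambda_R(\Omega^\sharp)}$, which depends only on $\m(\Omega)$, $n$, $\alpha$, and compensate via monotonicity of $\tau\mapsto\mu(\tau,\Omega)$. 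These are two phrasings of the same trick, but your variant makes the transfer of \eqref{quasipalla_c} into \eqref{quasipalla} and the stated dependence of $\delta$ on $\m(\Omega)$, $n$, $\alpha$ completely transparent, whereas the paper's $\kappa$ varies with $\Omega$ and so leaves that uniformity implicit.
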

%with a quite general notion of local maximum. 

%However very little is known about the Bossel--Daners
%inequality\footnote{This open question was raised by the Working Group on ``Low eigenvalues of Laplace operator'', during the Workshop on ``New trends in shape optimization" held at De Giorgi Center, Pisa, July 2012. \href{http://www.lama.univ-savoie.fr/ANR-GAOS/CRM\%20Pisa/index.html}{http://www.lama.univ-savoie.fr/ANR-GAOS/CRM\%20Pisa/index.html}
%} for negative $\alpha$, and it is possible to prove that the inequality 
%$\lambda(\Omega)\le\lambda(\Omega^\sharp)$ for bounded Lipschitz $\Omega$ is equivalent to the inequality $\lambda_R(\Omega)\le\lambda_R(\Omega^\sharp)$ for all $\alpha<0$. Therefore to a certain extent settling Conjecture \ref{conj} would complete the previous picture. %Observe that in view of such a duality we could work with $\lambda_R$ but, %a negative sign of $\alpha$ drastically changes the nature of the underlying Sobolev inequality. W
%when $\alpha<0$ we have $\lambda_R(\Omega)<0$, and instead of working with negative eigenvalues it seems more reasonable to use formulation \eqref{eq_eigenvalue}.

Now, concerning the proof of our results, the maximization of the eigenvalue $\lambda$ among Lipschitz sets 
%of given measure goes through the idea that it is possible to replace this problem by a 
is performed introducing a weighted isoperimetric problem which involves modified Bessel functions.
It could be of some interest to mention that inequality $\lambda_D(\Omega)\ge \lambda_D(\Omega^\sharp)$ was conjectured by Lord Rayleigh in 1877 and, as described in \cite{D3}, he provided a proof in the case of nearly circular sets (in the plane) using perturbation series involving Bessel functions.

We start by considering the function
$$z(x)=|x|^{1-\frac n2}\I_{\frac{n}{2}-1}(|x|), \qquad x\in\R^n,$$
where $\I_\nu$ denotes the modified Bessel function of order $\nu$ (see for instance \cite{AS}). When $\Omega=\Omega^\sharp$, $z$ is the extremal function in \eqref{eq_eigenvalue}, unique up to a multiplicative factor (\cite{MR}).
Actually $z$ is an analytic function in the whole $\R^n$ where it solves   
$$-\Delta z+z=0,$$
and it can be used as a test function in \eqref{eq_eigenvalue} even when $\Omega\ne\Omega^\sharp$. 
%To this aim, f
For any given bounded Lipschitz set $\Omega$ we define the notions of \emph{weighted volume} and \emph{weighted perimeter} by
\begin{equation*}
\displaystyle V(\Omega)=  \int_\Omega (|Dz|^2+z^2)\,dx,
\end{equation*}
and
\begin{equation*}
\displaystyle P(\Omega)=\int_{\partial \Omega} z^2\,d\h.
\end{equation*}

Then, by testing the right hand side of \eqref{eq_eigenvalue} with $z$, we get
$$\lambda(\Omega)\le\frac{V(\Omega)}{P(\Omega)},$$
with equality at least in the case $\Omega=\Omega^\sharp.$
%and moreover
%$$\lambda(\Omega^\sharp)=\frac{V(\Omega^\sharp)}{P(\Omega^\sharp)}.$$

This naturally suggests a way to prove Conjecture \ref{conj} by looking for the inequality 

\begin{equation}\label{key_intro}
\frac{V(\Omega)}{P(\Omega)}\le\frac{V(\Omega^\sharp)}{P(\Omega^\sharp)}.
\end{equation}
Unfortunately in general \eqref{key_intro} is false. For instance, let us consider $x_0\in\R^n\setminus\{0\}$. For what we have said before the function $z(x-x_0)$ is the unique extremal in \eqref{eq_eigenvalue} when $\Omega=B_\raggio(x_0)$ is the ball of radius $\raggio$ centered at $x_0$, and in this case we have
$$\frac{V(B_\raggio(x_0))}{P(B_\raggio(x_0))}>\lambda(B_\raggio(x_0))=\lambda(B_\raggio(0))=\frac{V(B_\raggio(0))}{P(B_\raggio(0))}.$$

We have used that $\lambda(\cdot)$ is invariant under translations, { and the resulting inequality }emphasizes that the same is not true for the ratio $\displaystyle \frac{V(\cdot)}{ P(\cdot)}$.
As a consequence, if there is any hope to prove \eqref{key_intro}, then it is crucial to carefully choose a suitable reference system for each $\Omega$.

%that it is possible to restrict the class of competitors to those smooth sets which are close to the ball in a strong sense. 
\vskip.4cm

The paper is divided into two parts.
\vskip.4cm
{\bf Part 1.} In Section \ref{s_reduction} we give a proof of Conjecture \ref{conj}, by way of \eqref{key_intro}, for nearly spherical sets. To formulate the result we first need some definition.
\begin{definition}[$\mathcal{N}(n,\ep)$ functions] For given $n\in\N$ and $\varepsilon>0$ we denote by $\mathcal{N}(n,\varepsilon)$ the set of functions $v\in W^{1,\infty}(\Ss^{n-1})$
such that 
\begin{enumerate}[1.)]
\item $\|v\|_{\Wi}\le\ep$\\
\item $\displaystyle\frac1n\int_{\Ss^{n-1}}(1+v(\xi))^n\,d\sigma_\xi=\omega_n$ \text{(\emph{Volume constraint})}\\
\item $\displaystyle\int_{\Ss^{n-1}}(1+v(\xi))^{n+1}\xi\,d\sigma_\xi=0$ \text{(\emph{Barycenter constraint})}. 
\end{enumerate}
Here we denote by $\sigma_\xi$ the surface area measure on $\Ss^{n-1}$ and we denote as usual by $\omega_n$ the volume of the unit ball in $\R^n$.
\end{definition}

Then we need the notion of nearly spherical sets (see also \cite{Fu1}).

\begin{definition}[\emph{$(n,\omega,\ep)$--NS} sets]\label{def_nearly}
Let $\omega>0$ and $0\le \ep\le 1$. We say that an open set $\Omega\subset\R^n$ is \emph{$(n,\omega,\ep)$--NS} (that is Nearly Spherical) if %, $\m(\Omega)=\omega$, and
there exists $v\in \mathcal{N}(n,\ep)$ such that, possibly up to a translation, the boundary of $\Omega$ in polar coordinates
$(r,\xi)\in[0,\infty]\times \Ss^{n-1}$, can be represented as $$r(\xi)=\raggio(1+v(\xi)).$$
Here $\raggio=\left(\omega/\omega_n\right)^{1/n}$ is the radius of the ball having same measure as $\Omega$.
\end{definition}
 
It is clear now why we labeled 2.) and 3.) by \emph{Volume constraint} and \emph{Barycenter constraint} respectively. In fact the \emph{Volume constraint} condition implies that when $\Omega$ is $(n,\omega,\ep)$--NS than $\m(\Omega)=\omega$, while the \emph{Barycenter constraint} condition establishes that in the reference frame where the boundary
of $\Omega$ is represented as $$r(\xi)=\raggio(1+v(\xi))$$ then the barycenter of $\Omega$ is placed in the origin.

%We will also denote by $\mathcal{N}(\ep)$ the space of Lipschitz continuous functions $v$ on $\Ss^{n-1}$ satisfying all the hypotheses in Definition \ref{def_nearly}, i.e.: the space of functions describing in polar coordinates the boundary of some \emph{$(\omega,\ep)$--spherical} set. Notice that the condition $\int_{\Ss^{n-1}}(1+v(\xi))^{n+1}\xi\,d\sigma_\xi=0$ sets the barycenter of $\Omega$ in the origin $O$.
The main statement of the section is the following.
\begin{theorem}\label{teonearly}
In any dimension $n$ and for every positive constant $\omega$ there exist two constants $\varepsilon$ and $K$ depending just on $n$ and $\omega$ such that, if $\Omega$ is any $(n,\omega,\ep)$--NS set and $v(\xi)\in\mathcal{N}(n,\ep)$ provides the polar representation of its boundary, then
\begin{equation*}
\begin{split}
\lambda(\Omega^\sharp)=\frac{V(\Omega^\sharp)}{P(\Omega^\sharp)}  \ge \frac{V(\Omega)}{P(\Omega)} \left(1+K(n,\omega)\int_{\Ss^{n-1}}v^2(\xi)\,d\sigma_\xi\right)\\
\ge \lambda(\Omega)\left(1+K(n,\omega)\int_{\Ss^{n-1}}v^2(\xi)\,d\sigma_\xi\right).
\end{split}
\end{equation*}
\end{theorem}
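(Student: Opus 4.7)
The plan is to use the radial Bessel function $z$ as a test function and compare the weighted functionals $V(\Omega)$ and $P(\Omega)$ directly to those of the ball. Since $-\Delta z+z=0$ on all of $\R^n$, the divergence theorem yields $V(\Omega)=\int_{\partial\Omega}z\,\partial_\nu z\,d\h$. Parametrizing $\partial\Omega$ by $r(\xi)=\raggio(1+v(\xi))$, the surface jacobian cancels exactly against the radial component of the outer normal in $\partial_\nu z$, and both functionals reduce to integrals on $\Ss^{n-1}$:
\begin{equation*}
V(\Omega)=\int_{\Ss^{n-1}} f(\raggio(1+v))\,d\sigma_\xi,
\qquad
P(\Omega)=\int_{\Ss^{n-1}} g(\raggio(1+v))\sqrt{1+\tfrac{|\nabla_\xi v|^2}{(1+v)^2}}\,d\sigma_\xi,
\end{equation*}
with $f(r)=z(r)z'(r)r^{n-1}$ and $g(r)=z^2(r)r^{n-1}$.

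Next, Taylor expand $f$ and $g$ at $r=\raggio$ up to second order in $v$, using the Bessel ODE $z''+\frac{n-1}{r}z'=z$ to simplify derivatives (it yields for example the clean identity $f'(r)=(z'(r)^2+z(r)^2)r^{n-1}$). Rewriting the \emph{Volume constraint} as $\int v\,d\sigma_\xi=-\tfrac{n-1}{2}\int v^2\,d\sigma_\xi+O(\ep^3)$ absorbs all linear contributions into second-order ones. Setting $\lambda_0=\lambda(\Omega^\sharp)=z'(\raggio)/z(\raggio)$, the expansion of the target quantity takes the form
\begin{equation*}
\lambda_0\, P(\Omega)-V(\Omega)=V(\Omega^\sharp)\Bigl[\gamma_1\int_{\Ss^{n-1}}\! v^2\,d\sigma_\xi+\gamma_2\int_{\Ss^{n-1}}\! |\nabla_\xi v|^2\,d\sigma_\xi\Bigr]+R(v),
\end{equation*}
with explicit $\gamma_1,\gamma_2$ depending only on $n,\raggio,\lambda_0$ and a remainder satisfying $|R(v)|\le C(n,\omega)\,\ep\,\|v\|_{H^1(\Ss^{n-1})}^2$.

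Third, decompose $v=\sum_{\ell\ge 0}v_\ell$ into spherical harmonics. The Volume constraint forces $\|v_0\|_{L^2}^2=O(\ep^4)$, and the \emph{Barycenter constraint} $\int(1+v)^{n+1}\xi\,d\sigma_\xi=0$ yields $\int v\,\xi\,d\sigma_\xi=O(\ep^2)$, hence $\|v_1\|_{L^2}^2=O(\ep^4)$. On the residual subspace $\ell\ge 2$ the sharp Poincar\'e inequality on $\Ss^{n-1}$ reads $\int|\nabla_\xi v_\ell|^2\ge 2n\int v_\ell^2$, giving
\begin{equation*}
\gamma_2\int|\nabla_\xi v|^2+\gamma_1\int v^2\;\ge\;(2n\gamma_2+\gamma_1)\int v^2-C(n,\omega)\,\ep^2\int v^2.
\end{equation*}
Using the identity $\lambda_0'(\raggio)=1-\tfrac{n-1}{\raggio}\lambda_0(\raggio)-\lambda_0(\raggio)^2$ (a direct consequence of the Bessel ODE) the coefficient collapses to the compact form $2n\gamma_2+\gamma_1=\frac{1}{n\omega_n}\bigl[\tfrac{n+1}{2}-\raggio^2\lambda_0'(\raggio)\bigr]$, so positivity reduces to the inequality $\raggio^2\lambda_0'(\raggio)<\tfrac{n+1}{2}$. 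Picking $K(n,\omega)$ slightly smaller than this value and $\ep$ small enough to absorb both $R(v)$ and the $O(\ep^2)$ defect (using $\gamma_2>0$ to control the $\|\nabla_\xi v\|_{L^2}^2$ part of the remainder) yields the stated inequality, with strict inequality unless all spherical modes vanish.

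The main obstacle is the uniform strict positivity of $2n\gamma_2+\gamma_1$. The asymptotics of $\I_{n/2-1}$ give $\lambda_0\sim\raggio/n$ as $\raggio\to 0^+$ and $\lambda_0=1-\tfrac{n-1}{2\raggio}+\tfrac{(n-1)(n-3)}{8\raggio^2}+O(\raggio^{-3})$ as $\raggio\to\infty$, so $\raggio^2\lambda_0'(\raggio)$ tends respectively to $0$ and to $\tfrac{n-1}{2}$; both limits are strictly below $\tfrac{n+1}{2}$, but the intermediate range requires a quantitative monotonicity-type control of $\raggio\mapsto\raggio^2\lambda_0'(\raggio)$, which is the technical heart of the argument. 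Once this positivity is established, the rest (Taylor remainder absorption and the equality case) is routine.
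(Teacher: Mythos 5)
Your overall architecture is the same as the paper's: test the Rayleigh quotient with the radial Bessel function $z$, reduce $V(\Omega)$ and $P(\Omega)$ to integrals over $\Ss^{n-1}$ involving $f(r)=z\,z'\,r^{n-1}$ and $g(r)=z^2\,r^{n-1}$, Taylor-expand to second order in $v$, absorb linear terms via the \emph{Volume constraint}, kill the $\ell=0,1$ spherical modes via the Volume and Barycenter constraints so the Poincar\'e constant improves from $n-1$ to $2n$, and reduce everything to the strict positivity of a single Bessel expression. Your reformulation $2n\gamma_2+\gamma_1 = \tfrac{1}{n\omega_n}\bigl[\tfrac{n+1}{2}-\raggio^2\lambda_0'(\raggio)\bigr]$ (with $\lambda_0(r)=z'(r)/z(r)$ satisfying the Riccati equation $\lambda_0'=1-\tfrac{n-1}{r}\lambda_0-\lambda_0^2$) is in fact algebraically equivalent, after multiplying through by $\I_\nu^2$, to the paper's inequality $H_n(s)>0$, where $H_n(s)=2s^2\I_{\nu+1}^2+2(2\nu+1)s\I_\nu\I_{\nu+1}+(-2s^2+2\nu+3)\I_\nu^2$ with $\nu=\tfrac n2-1$; so far, so good.

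The gap is exactly where you say it is, and it is not a minor technicality: you never prove $\raggio^2\lambda_0'(\raggio)<\tfrac{n+1}{2}$ for all $\raggio>0$. Checking the two endpoint asymptotics ($\raggio^2\lambda_0'\to 0$ as $\raggio\to0^+$ and $\raggio^2\lambda_0'\to\tfrac{n-1}{2}$ as $\raggio\to\infty$) does not yield the inequality in the intermediate range, and there is no a priori reason for $\raggio\mapsto\raggio^2\lambda_0'(\raggio)$ to be monotone or to stay below $\tfrac{n+1}{2}$ without a real argument. This inequality is the entire content of Proposition~\ref{main_prop} in the paper and its proof occupies the core of the section: after the same reduction, the paper establishes $H_n(s)>0$ for all $s>0$ by the differential-inequality trick $s\,H_n'(s)\ge H_n(s)$ whenever $4\nu^2+4\nu-1\ge0$ (i.e.\ $n\ge3$), which upgrades the positive boundary data $H_n(0^+)>0$ to positivity everywhere; the case $n=2$ is handled separately by differentiating once more and noting $\tfrac{d}{ds}\bigl(s\,H_2'(s)\bigr)=6s\I_{\nu+1}^2+2s\I_\nu^2>0$. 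Without some argument of this kind, your proof is an unproven conditional statement, not a proof. Everything else you outline — remainder absorption via smallness of $\ep$, using $\gamma_2>0$ to dominate the gradient part of the error, and the equality case — is consistent with the paper's treatment and would go through once the key positivity is established.
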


We explicitly observe that the above theorem provides an estimate on how close $\Omega$ is to  a ball in terms of the eigenvalue gap. Recent results for the first Dirichlet eigenvalue in this direction
are contained, for example, in \cite{BNT, BDe, FMP, N1}.

\vskip.4cm

\noindent{\bf Part 2.} In Section \ref{s_main} we prove Theorem \ref{mainteo}. %With the aim of proving \eqref{key_intro}, { we extend the notion of $V(\cdot)$ and $P(\cdot)$ to finite perimeter sets} and minimize the following quantity
%$$P(\Omega)-\frac{P(\Omega^\sharp)}{V(\Omega^\sharp)}V(\Omega)$$
%among equibounded finite perimeter sets $\Omega$ of given $\m$ measure and having the barycenter in the origin. 
Our ultimate goal is to prove that for any given $\omega>0$ there exists $\delta>0$ such that the ball centered in the origin is the unique minimizer of
\begin{equation}\label{originale}
P(\Omega)-\frac{P(\Omega^\sharp)}{V(\Omega^\sharp)}V(\Omega)
\end{equation}
among all finite perimeter sets $\Omega$, having Lebesgue measure $\m(\Omega)=\omega$, barycenter $X(\Omega)$ in the origin, and such that $B_{\raggio-\delta}\subset\Omega\subset B_{\raggio+\delta}$.
%$$\min_\Omega\left\{P(\Omega)-\frac{P(\Omega^\sharp)}{V(\Omega^\sharp)}V(\Omega): \m(\Omega)=\omega,\>B_{\raggio-\delta}\subset\Omega\subset B_{\raggio+\delta},\>X(\Omega)=0\right\},
%$$
%where $X(\Omega)$ is the barycenter of $\Omega$.
Obviously this approach first require a natural generalization of notion of $V$ and $P$ for finite perimeter sets. We end up then with a subtle weighted constrained isoperimetric problem which can not be elementarily tackled. However we replace such a constrained minimization by a penalized one, following an original powerful idea which has been introduced in \cite{CL} to prove a stability result for the classical isoperimetric inequality. Since then, such a technique has been further developed and refined in several different contexts, but among the others our construction is reminiscent of the one worked out in \cite{AFM}. 

The core of the strategy is to show that the previous minimization is indeed equivalent to minimizing the functional 
$$P(\Omega)-\frac{P(\Omega^\sharp)}{V(\Omega^\sharp)}V(\Omega)+\Lambda_1|X(\Omega)|+\Lambda_2\,|\,\m(\Omega)-\omega|+\Lambda_3(V(\Omega\backslash B_{\raggio+\delta})+V(B_{\raggio-\delta}\backslash \Omega))$$
for some choice of the positive constants $\Lambda_1$, $\Lambda_2$ and $\Lambda_3$.

%Then, it is possible to infer existence and $C^{1,\frac 12}$ regularity of the minimizers for any choice of $\omega$ and $\delta$. Moreover, every minimizer $\Omega$ happens to be nearly spherical provided for $\delta$ small enough, thus immediately implying (by Theorem \ref{teonearly}) that for $\delta$ small enough the minimizer is actually unique and it is a ball.
%Finally, we prove Corollary \ref{corol}.

%In the proof of Theorem \ref{mainteo} we use some results concerning regularity theory of almost-minimizers. We collect here few results that we are going to use. 
For such a functional it is easy to show that for any given $\omega>0$ and $\delta>0$, every minimizer is a so called almost-minimizer for the Euclidean perimeter. We refer the interested reader to \cite{A,T1,T2}, as well as to \cite{Ma} and the references therein. For our purposes it is enough to recall that a finite perimeter set $\Omega$ is an almost-minimizer for the Euclidean perimeter $\per(\Omega)$ if there exist two positive constants $K$ and $r_0$, such that
\begin{equation}\label{almminp}
 \per(\Omega)\le  \per(\tilde\Omega)+K r^n,
\end{equation}
whenever $\Omega\dsim\tilde\Omega\subset\subset B_r(x_0)$ and $0<r<r_0$. An almost-minimizer for the perimeter $\Omega$ has reduced boundary which is a $C^{1,1/2}$ hypersurface. Moreover if $\Omega_h$ is any sequence of almost-minimizers with uniform constants $K$ and $r_0$, converging to a ball in $L^1$ as $h\to\infty$, then $\Omega_h$ is nearly spherical in the sense of Definition \ref{def_nearly} for sufficiently large $h$. Furthermore if $v_h(\xi)\in\mathcal{N}$ is the corresponding function which, according to Definition \ref{def_nearly}, provides the polar representation of $\partial\Omega$, then $v_h\to0$ in $C^{1,\alpha}(\Ss^{n-1})$ for every $\alpha\in(0,1/2)$, as $h\to \infty$.

Combining all these results, we have that for any given positive $\omega$, if $\delta>0$ is small enough, every minimizer of the constrained functional \eqref{originale} is a nearly spherical set. Therefore in view of Theorem \ref{teonearly}, possibly choosing a smaller $\delta$,  the minimizer is also unique and it is a ball.

\section{Proof of Theorem \ref{teonearly}}\label{s_reduction}

%From now on we use $C$ to denote any positive real constant which depends just on $\delta$, $\omega$, $n$, and which, for fixed $\omega$ and $n$, can be taken arbitrarily small provided $\delta$ alone is small enough.
%In what follows we shall denote by $\Omega$ a nearly spherical set in $\mathcal{N}(\omega,\ep)$, and according to Definition \ref{def_nearly} the function $v(\xi)$ gives the representation in polar coordinate of its boundary through the formula
%Let $\omega$ be real and positive and let
{ Throughout this section, whenever we consider a nearly spherical set, we will choose a reference frame such that the barycenter is set in the origin. Since $\lambda(\cdot)$ is invariant under translations our choice bears by no means a loss of generality.}

For the sake of simplicity we split the proof in several steps. 
Throughout the paper, $B_r=B_r(0)$, with $r>0$, and, given $\omega>0$, we denote $\raggio=\left(\omega/\omega_n\right)^{1/n}$, so that $B_\raggio$ belongs to $(n,\omega,\ep)$--NS. 

%Denoting by $\I_\nu$ the modified Bessel function of order $\nu$ (see \cite{}), for all $t\ge 0$ w
We set
\begin{equation}\label{eq_h}
h_\raggio(t)=\left((t\raggio)^{1-\frac{n}{2}}\,\I_{\frac{n}{2}-1}(t\raggio)\right)^2
\end{equation}
and
\begin{equation}\label{eq_f}
f_\raggio(t)=\frac{h_\raggio'(t)}{2\raggio}=(t\raggio)^{2-n}\I_{\frac{n}{2}-1}(t\raggio)\,\I_{\frac{n}{2}}(t\raggio),
\end{equation} 
where we have used the first one of the following derivation rules for Bessel functions
\begin{equation}\label{besseld}
\I_{\nu}'(s)=\frac{\nu}s\I_{\nu}(s)+\I_{\nu+1}(s) \qquad s\in\R,
\end{equation} 
\begin{equation}\label{besseld2}
\I'_{\nu+1}(s)=\I_\nu(s)-\frac{\nu+1} s\I_{\nu+1}(s) \qquad s\in\R.
\end{equation}

Before entering into the details we recall that (see for instance \cite{AS}) the functions 
 $s^{1-n/2}\I_{\frac{n}{2}-1}(s)$ and $s^{1-n/2}\I_{\frac{n}{2}}(s)$ are increasing and analytic in $(0,\infty)$ and
 for all $\alpha\in\R$ and $n\in\N$ we have
\begin{eqnarray}
\lim_{s\to 0}s^{1-n/2}\I_{\frac{n}{2}-1}(s)&=&  \frac1{2^{\frac{n}{2}-1}\Gamma(\frac{n}{2})}\label{bessel_1},\\
\lim_{s\to +\infty}s^{\alpha}\I_{\frac{n}{2}-1}(s)&=&+\infty.\label{bessel_2}
\end{eqnarray}

Our starting point is the following estimate.

\begin{lemma}\label{lem_stima}
If for some $n\in\N$, $\omega>0$ and $0<\ep<1$ a set $\Omega$ belongs to $(n,\omega,\ep)$--NS and $v\in\mathcal{N}(n,\ep)$ is the function that describes its boundary then
\begin{equation}\label{eq_stima}
\lambda(\Omega)\le{ \frac{V(\Omega)}{P(\Omega)}}=\frac{\is f_\raggio(1+v(\xi))\,(1+v(\xi))^{n-1}\,d\sigma_\xi}{\is h_\raggio(1+v(\xi))\,(1+v(\xi))^{n-1}\sqrt{1+\frac{|Dv(\xi)|^2}{(1+v(\xi))^2}}\,d\sigma_\xi},
\end{equation}
Moreover if $\Omega=\Omega^\sharp\equiv B_\raggio$ then equality holds in \eqref{eq_stima} and $\lambda(\Omega^\sharp)=\dfrac{f_\raggio(1)}{h_\raggio(1)}$.
\end{lemma}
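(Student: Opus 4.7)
The plan is to test the Rayleigh quotient in \eqref{eq_eigenvalue} with the radial function $z$. Since $z$ is analytic on all of $\R^n$, it lies in $H^1(\Omega)$ for any bounded Lipschitz $\Omega$, and the variational characterization gives at once
$$\lambda(\Omega)\le\frac{\int_\Omega (|Dz|^2+z^2)\,dx}{\int_{\partial\Omega}z^2\,d\h}=\frac{V(\Omega)}{P(\Omega)}.$$
It then remains to rewrite $V(\Omega)$ and $P(\Omega)$ as integrals over $\Ss^{n-1}$ via the polar parametrization $\partial\Omega=\{\raggio(1+v(\xi))\xi:\xi\in\Ss^{n-1}\}$.

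For $P(\Omega)$ the computation is direct: writing $z(x)=\tilde z(|x|)$ with $\tilde z(t)=t^{1-n/2}\I_{n/2-1}(t)$, the area element of $\partial\Omega$ in these coordinates is $r(\xi)^{n-1}\sqrt{1+|Dv|^2/(1+v)^2}\,d\sigma_\xi$ (where $Dv$ denotes the tangential gradient on $\Ss^{n-1}$), and using $\tilde z^2(\raggio(1+v))=h_\raggio(1+v)/\raggio^{2-n}\cdot\raggio^{2-n}$, more precisely the identity $\tilde z^2(s\raggio)=h_\raggio(s)$ that follows straight from the definition \eqref{eq_h}, one obtains
$$P(\Omega)=\raggio^{n-1}\is h_\raggio(1+v)(1+v)^{n-1}\sqrt{1+\tfrac{|Dv|^2}{(1+v)^2}}\,d\sigma_\xi.$$

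The key step is handling $V(\Omega)$. The cleanest way is to slice in the radial direction, write
$$V(\Omega)=\is \int_0^{r(\xi)}\bigl((\tilde z'(t))^2+\tilde z(t)^2\bigr)t^{n-1}\,dt\,d\sigma_\xi,$$
and then exploit the radial ODE $\tilde z''+(n-1)\tilde z'/t=\tilde z$ satisfied by $\tilde z$ (equivalent to $-\Delta z+z=0$). A one-line computation yields
$$\frac{d}{dt}\bigl[t^{n-1}\tilde z(t)\tilde z'(t)\bigr]=t^{n-1}\bigl((\tilde z'(t))^2+\tilde z(t)^2\bigr),$$
so the inner integral telescopes to $r(\xi)^{n-1}\tilde z(r(\xi))\tilde z'(r(\xi))$. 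Invoking now the identity $\tilde z(s\raggio)\tilde z'(s\raggio)=h'_\raggio(s)/(2\raggio)=f_\raggio(s)$, which follows from \eqref{eq_h}--\eqref{eq_f} and the Bessel derivation rule \eqref{besseld}, gives
$$V(\Omega)=\raggio^{n-1}\is f_\raggio(1+v)(1+v)^{n-1}\,d\sigma_\xi.$$
The ratio $V/P$ cancels the common $\raggio^{n-1}$, producing exactly \eqref{eq_stima}.

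Finally, the equality statement for $\Omega=B_\raggio$ corresponds to $v\equiv 0$, in which case the above formulas collapse to $V(B_\raggio)/P(B_\raggio)=f_\raggio(1)/h_\raggio(1)$; moreover, since $z$ is the (unique up to a multiplicative constant) extremal in \eqref{eq_eigenvalue} when $\Omega=B_\raggio$ by \cite{MR}, the Rayleigh-quotient inequality above becomes equality, giving $\lambda(\Omega^\sharp)=f_\raggio(1)/h_\raggio(1)$. There is no real obstacle here; the only subtlety is to notice that the radial ODE allows one to integrate away the bulk term in $V(\Omega)$ and reduce it to a single boundary integral, which is precisely what makes the polar formulation in \eqref{eq_stima} amenable to the perturbative analysis that will follow.
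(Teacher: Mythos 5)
Your proof is correct and follows essentially the same strategy as the paper: test the Rayleigh quotient with the radial eigenfunction $z$ and then express $V(\Omega)$ and $P(\Omega)$ in polar coordinates. The only minor difference is in how $V(\Omega)$ is reduced to a boundary integral: the paper applies Green's identity first, writing $V(\Omega)=\int_{\partial\Omega}z\,\partial_\nu z\,d\h$, and then passes to polar coordinates (where the $\sqrt{1+|Dv|^2/(1+v)^2}$ factor from $d\h$ cancels against the same factor in $\partial_\nu z$), whereas you pass to polar coordinates first and integrate out the radial variable via the ODE identity $\frac{d}{dt}[t^{n-1}\tilde z\tilde z']=t^{n-1}((\tilde z')^2+\tilde z^2)$; your route has the small advantage of never requiring the normal derivative of $z$ on the tilted boundary $\partial\Omega$, so no cancellation of area-element factors needs to be observed. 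Both computations produce the same formula for $V(\Omega)$, and the rest (the $P(\Omega)$ computation and the equality case $v\equiv0$) is identical.
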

\begin{proof}
The statement follows at once from the variational formulation of $\lambda(\Omega)$ as
\begin{equation}\label{eq_variational2}
\lambda(\Omega)=\min_{w\in H^1(\Omega)}\frac{\displaystyle\int_\Omega(|Dw|^2+w^2)\,dx}{\displaystyle\int_{\partial \Omega}w^2\,d\h},
\end{equation}
and the definition of $P(\Omega)$ and $V(\Omega)$ given in the introduction.

Indeed, if  $z(x)=|x|^{1-\frac{n}{2}}\I_{\frac{n}{2}-1}(|x|)$, one has
$$-\Delta z(x)+z(x)=0\qquad \forall x\in\R^n $$
and
\begin{equation}\label{eq_stima2}
\lambda(\Omega)\le{\frac{V(\Omega)}{P(\Omega)}\equiv}\frac{\displaystyle\int_\Omega(|Dz|^2+z^2)\,dx}{\displaystyle\int_{\partial \Omega}z^2\,d\h}
=\frac{\displaystyle\int_{\partial\Omega} \frac{\partial z}{\partial \nu}z\,d\h}{\displaystyle\int_{\partial \Omega} z^2\,d\h},
\end{equation}
where $\nu$ is the unit outer normal to $\partial \Omega$.

Then, to go from \eqref{eq_stima2} to \eqref{eq_stima} we just use the explicit representation of $u$ in terms of modified Bessel functions together with the explicit representation of the boundary of $\Omega$ in terms of $v(\xi)$ according to Definition \ref{def_nearly}. %We also use the differentiation rule \cite{}:

Finally, we observe that $z(x)=|x|^{1-\frac{n}{2}}\I_{\frac{n}{2}-1}(|x|)$ is precisely the first eigenfunction of \eqref{eigen_problem} when $\Omega$ is any 
ball centered in the origin. Indeed, $z(x)$ achieves the minimum on the right hand side of \eqref{eq_variational2} anytime $\Omega=\Omega^\sharp$. %This also implies that when $\Omega=\Omega^\sharp$ then $\lambda(\Omega^\sharp)=\frac{f(1)}{h(1)}.$
\end{proof}
%Therefore Theorem \ref{mainteo} is equivalent to the following Proposition

The following Lemma, whose proof is included for completeness (see also \cite{Fu1}), holds.
\begin{lemma}\label{lem_expans}
%If $\Omega\subset\R^n$ belongs to $\mathcal{N}(\omega,\ep)$ then 
Let $n\in\N$. There exists a positive constant $C$ which depends on $n$ only, such that for any given $0<\varepsilon<1$ and for all $v\in\mathcal{N}(n,\varepsilon)$ then

\begin{enumerate}[(E1)]
\item $\left| (1+v)^{n-1}-\left( 1+(n-1)v+(n-1)(n-2)\frac{v^2}{2} \right)  \right| \le C\varepsilon v^2\quad \text{on } \Ss^{n-1}.$
\item $\displaystyle 1+\frac{|Dv|^2}{2}-\sqrt{1+\frac{|Dv|^2}{(1+v)^2}}\le C\varepsilon \left(v^2+|Dv|^2\right)\quad \text{on } \Ss^{n-1}$.
\end{enumerate}
Moreover

\begin{equation}\label{eq_volume}
\left|\int_{\Ss^{n-1}}v(\xi)\,d\sigma_\xi+\frac{n-1}{2}\int_{\Ss^{n-1}}v^2(\xi)\,d\sigma_\xi\right|\le C\varepsilon\|v\|^2_{L^2}.
\end{equation}

\end{lemma}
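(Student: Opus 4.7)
All three estimates reduce to controlled Taylor expansions of smooth one-variable functions whose argument is uniformly bounded by $\varepsilon<1$, and the common mechanism is to absorb the cubic-in-$v$ or quartic-in-$|Dv|$ remainders into $\varepsilon$ times $v^2$ or $|Dv|^2$ using $\|v\|_{W^{1,\infty}}\le\varepsilon\le 1$.

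For (E1), the plan is to apply Taylor's theorem with Lagrange remainder to $f(t)=(1+t)^{n-1}$ at $t=0$ up to second order. Since $|v|\le\varepsilon<1$, any intermediate point $\xi\in[-\varepsilon,\varepsilon]$ satisfies $|1+\xi|\le 2$, so $|f'''(\xi)|$ is bounded by a constant depending only on $n$, and the remainder $\tfrac16 f'''(\xi)v^3$ is pointwise dominated by $C_n|v|^3\le C_n\varepsilon v^2$, which is exactly the claim.

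For (E2), I would split the expression as
$$
\Bigl(1+\tfrac12|Dv|^2-\sqrt{1+|Dv|^2}\Bigr)+\Bigl(\sqrt{1+|Dv|^2}-\sqrt{1+\tfrac{|Dv|^2}{(1+v)^2}}\Bigr).
$$
The first summand, after rationalization, equals $\frac{|Dv|^4/4}{(1+|Dv|^2/2)+\sqrt{1+|Dv|^2}}\le\tfrac18|Dv|^4\le\tfrac18\varepsilon^2|Dv|^2$. For the second, I would use $\sqrt{a}-\sqrt{b}=(a-b)/(\sqrt{a}+\sqrt{b})$ with denominator $\ge 2$, together with the identity $1-\tfrac{1}{(1+v)^2}=\tfrac{v(2+v)}{(1+v)^2}$; combined with $(1+v)^2\ge(1-\varepsilon)^2$ and $|v|\le\varepsilon$, this term is pointwise bounded by $C|Dv|^2|v|\le C\varepsilon|Dv|^2$. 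Summing the two bounds and noting $|Dv|^2\le v^2+|Dv|^2$ yields (E2).

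For \eqref{eq_volume}, I would expand $(1+v)^n=1+nv+\binom{n}{2}v^2+R(v)$ with $|R(v)|\le C_n|v|^3\le C_n\varepsilon v^2$, integrate over $\Ss^{n-1}$, and use the volume constraint $\tfrac1n\int_{\Ss^{n-1}}(1+v)^n\,d\sigma=\omega_n$ together with $\int_{\Ss^{n-1}}d\sigma=n\omega_n$: the constant terms cancel, the linear and quadratic terms produce the left-hand side of \eqref{eq_volume} after dividing by $n$, and the remainder contributes the error $C\varepsilon\|v\|_{L^2}^2$. No genuine obstacle is expected here; the only point that requires bookkeeping across the three items is verifying that every constant can indeed be taken to depend on $n$ alone, which is automatic once the $W^{1,\infty}$ bound is used to replace each $O(|v|^3)$ or $O(|Dv|^4)$ factor by $\varepsilon$ times a quadratic quantity.
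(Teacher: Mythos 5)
Your argument follows the same strategy as the paper — controlled second-order Taylor/binomial expansions with the cubic or quartic remainders absorbed into $\varepsilon$ times a quadratic quantity via the $W^{1,\infty}$ bound — and is correct. The only cosmetic difference is in (E2), where the paper applies the single inequality $\sqrt{1+s}\ge 1+\tfrac{s}{2}-\tfrac{s^{2}}{4}$ directly to $s=|Dv|^{2}/(1+v)^{2}$, whereas you split off $\sqrt{1+|Dv|^{2}}$ and rationalize each piece separately; both are elementary and equivalent in spirit.
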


\begin{proof}
Although the same constant $C$ appears in (E1), (E2) and \eqref{eq_volume}, throughout this proof, for the sake of simplicity, the constant $C$ is meant to be any constant that may be determined in terms of $n$ alone.

That said, inequality (E1) follows immediately from
 %By Newton formula
$$(1+v)^{n-1}=\sum_{k=0}^{n-1}\binom {n-1}kv^k,$$ using the fact that $|v|\le\varepsilon$.

On the other hand, to prove (E2) we just use the trivial inequality $\sqrt{1+s}\ge 1+\frac s2-\frac {s^2}4$ which holds true for all nonnegative $s$, to get

\begin{eqnarray*}
\sqrt{1+\frac{|Dv|^2}{(1+v)^2}}&\ge& 1+\frac{|Dv|^2}{2(1+v^2)}-\frac{|Dv|^4}{4(1+v^2)^2}\\
&\ge& 1+\frac{|Dv|^2}{2(1+\varepsilon^2)}-\frac{|Dv|^4}{4}\\
&\ge& 1+\frac{|Dv|^2}{2}-C\varepsilon \left(v^2+|Dv|^2\right).
\end{eqnarray*}
%for some suitable positive constant $C$.

Finally, since $v\in \mathcal{N}(n,\varepsilon)$ 
we know that
$$\frac1n\int_{\Ss^{n-1}}(1+v(\xi))^{n}\,d\sigma_\xi=\omega_n,$$
and integrating over $\Ss^{n-1}$ the identity
$$(1+v)^{n}=\sum_{k=0}^{n}\binom nkv^k,$$
we easily get also inequality \eqref{eq_volume}.
\end{proof}

A trivial consequence of the analyticity of $f_\raggio(\cdot)$ and $h_\raggio(\cdot)$ is the following Lemma.

\begin{lemma}\label{lem_taylor}
Let $n\in\N$ and $\omega>0$, and let $f$ and $h$ be the functions defined in \eqref{eq_h} and \eqref{eq_f}. There exists a positive constant $K$ depending on $n$ and $\omega$ alone such that, for any given $0<\varepsilon<1$ and for all $v\in\mathcal{N}(n,\varepsilon)$ then

\begin{enumerate}[(T1)]
\item $|h_\raggio(1+v)-h_\raggio(1)-h_\raggio'(1)v-h_\raggio''(1)\frac{v^2}{2}|\le K\varepsilon v^2\quad \text{on } \Ss^{n-1}$
\item $|f_\raggio(1+v)-f_\raggio(1)-f_\raggio'(1)v-f_\raggio''(1)\frac{v^2}{2}|\le K\varepsilon v^2\quad \text{on } \Ss^{n-1}$
\end{enumerate}
\end{lemma}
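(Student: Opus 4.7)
The plan is to obtain both inequalities as a straightforward application of Taylor's theorem with remainder, exploiting the fact that the functions $h_\raggio$ and $f_\raggio$ are analytic.

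First, I would verify that $h_\raggio(t)$ and $f_\raggio(t)$ extend to entire functions of $t\in\R$. This follows from the well-known series representation
\[
\I_\nu(s)=\sum_{k=0}^{\infty}\frac{(s/2)^{\nu+2k}}{k!\,\Gamma(\nu+k+1)},
\]
which shows that $s\mapsto s^{-\nu}\I_\nu(s)$ is an entire function. Applying this with $\nu=\frac{n}{2}-1$ and $\nu=\frac{n}{2}$, we see that $t\mapsto h_\raggio(t)$ and $t\mapsto f_\raggio(t)$ are both smooth (indeed analytic) on all of $\R$, with all derivatives locally bounded by constants depending only on $n$ and $\raggio$ (hence on $n$ and $\omega$).

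Next, since $v\in\mathcal{N}(n,\varepsilon)$ with $0<\varepsilon<1$, we have $|v(\xi)|\le\varepsilon<1$ for every $\xi\in\Ss^{n-1}$, so that $1+v(\xi)$ belongs to the fixed compact interval $[1-\varepsilon,1+\varepsilon]\subset[0,2]$. Set
\[
M_h=\sup_{t\in[0,2]}|h_\raggio'''(t)|,\qquad M_f=\sup_{t\in[0,2]}|f_\raggio'''(t)|.
\]
Both quantities are finite and depend only on $n$ and $\omega$. By Taylor's theorem applied to $h_\raggio$ at $t=1$, for each fixed $\xi\in\Ss^{n-1}$ there exists $\tau$ between $1$ and $1+v(\xi)$ such that
\[
h_\raggio(1+v)=h_\raggio(1)+h_\raggio'(1)\,v+h_\raggio''(1)\,\frac{v^2}{2}+\frac{h_\raggio'''(\tau)}{6}\,v^3.
\]
Hence
\[
\Bigl|h_\raggio(1+v)-h_\raggio(1)-h_\raggio'(1)v-h_\raggio''(1)\tfrac{v^2}{2}\Bigr|\le\frac{M_h}{6}|v|^3\le\frac{M_h}{6}\varepsilon\,v^2,
\]
where in the last step we used $|v|\le\varepsilon$. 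This establishes (T1) with constant $K\ge M_h/6$. The identical argument applied to $f_\raggio$ yields (T2) with constant $K\ge M_f/6$. Taking $K=\max\{M_h,M_f\}/6$ gives a single constant depending only on $n$ and $\omega$ which works for both inequalities.

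There is no serious obstacle here; the only mildly delicate point is checking that $h_\raggio$ and $f_\raggio$ are smooth at (and near) $t=0$, which requires looking at the series expansion of the Bessel functions rather than relying on their behaviour for large argument. Once this is in hand, the rest is just the standard Lagrange form of the Taylor remainder.
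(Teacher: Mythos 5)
Your proof is correct and is exactly the argument the paper has in mind: the paper does not actually supply a proof of this lemma, stating it only as ``a trivial consequence of the analyticity of $f_\raggio$ and $h_\raggio$'', and your write-up spells out that standard Taylor-with-Lagrange-remainder argument, including the (appropriate) observation that $s^{-\nu}\I_\nu(s)$ is entire so that the third derivatives are bounded on $[0,2]$ by constants depending only on $n$ and $\omega$.
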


%\begin{proof} 

%In the same spirit as the previous lemma, although the same constant $K$ appears in both (T1) and (T2), throughout this proof, for the sake of simplicity, the constant $K$ is meant to be any constant that may be determined in terms of $n$ and $\omega$ alone.

%That said, the proof is trivial and consists in a Taylor expansion of the analytic functions $f_\raggio(1+v)$ and $h_\raggio(1+v)$ around $v=0$. For instance, to deduce (T1) we use that
%$$h_\raggio(1+v)= h_\raggio(1)+h_\raggio'(1)v+h_\raggio''(1)\frac{v^2}{2}+h_\raggio'''(1+\theta)\frac{v^3}{3!}  \quad\text{on } \Ss^{n-1}$$
%for some function $\theta$ defined on $\Ss^{n-1}$ satisfying $|\theta|\le \varepsilon<1$. The constant $K$ will depend on the uniform bound of $h_\raggio'''(1+v)$ in $[-1,1]$ that certainly exists. As a consequence $K$ depends just on $n$ and $\raggio$ (i.e. it depends on $\omega$). 
% $$-h(1+v) + h(1)+h'(1)v+h''(1)\frac{v^2}{2}\le K\varepsilon v^2\quad \text{on } \Ss^{n-1}.$$  
%We can argue similarly to deduce (T2) though in such a case $K$ will be related to the bound of the third order derivative of $f$.
%\end{proof}

\begin{lemma}[Poincar\'e Inequality \cite{Fu1}]\label{lem_poinc}
Let $n\in\N$, then there exists a suitable positive constant $C$ such that for any given $0<\varepsilon<1$ and for all $v\in\mathcal{N}(n,\varepsilon)$ then
\begin{equation}\label{eq_poinc}
\|Dv\|_{L^2}^2\ge 2n (1-C\varepsilon)\|v\|_{L^2}^2.
\end{equation}
\end{lemma}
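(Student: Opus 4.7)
The plan is to expand $v$ in the basis of spherical harmonics on $\Ss^{n-1}$ and exploit the two nonlinear constraints defining $\mathcal{N}(n,\varepsilon)$ to control the low-frequency modes. Writing $v=\sum_{k\ge 0}v_k$, where $v_k$ belongs to the eigenspace of $-\Delta_{\Ss^{n-1}}$ associated with the eigenvalue $k(k+n-2)$, the orthogonality relations in $L^2(\Ss^{n-1})$ give
$$\|v\|_{L^2}^2=\sum_{k\ge 0}\|v_k\|_{L^2}^2,\qquad \|Dv\|_{L^2}^2=\sum_{k\ge 1}k(k+n-2)\|v_k\|_{L^2}^2.$$
Since $k(k+n-2)\ge 2n$ for every $k\ge 2$, a simple rearrangement reduces \eqref{eq_poinc} to establishing $\|v_0\|_{L^2}^2+\|v_1\|_{L^2}^2\le C\varepsilon\|v\|_{L^2}^2$ for some constant $C=C(n)$.

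To bound the $k=0$ mode I would observe that $v_0$ coincides with the mean value of $v$ on $\Ss^{n-1}$. The Volume constraint together with inequality \eqref{eq_volume} of Lemma \ref{lem_expans} immediately yields $|\int_{\Ss^{n-1}}v\,d\sigma_\xi|\le C\|v\|_{L^2}^2$, and since $\|v\|_{L^2}^2\le|\Ss^{n-1}|\varepsilon^2$ this gives $\|v_0\|_{L^2}^2\le C\varepsilon^2\|v\|_{L^2}^2$. For the $k=1$ mode, whose eigenspace is spanned by the coordinate functions $\xi_1,\dots,\xi_n$, I would expand the Barycenter constraint $\int_{\Ss^{n-1}}(1+v)^{n+1}\xi\,d\sigma_\xi=0$ via the binomial formula. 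Because $\int_{\Ss^{n-1}}\xi\,d\sigma_\xi=0$, the leading term $(n+1)\int v\,\xi\,d\sigma_\xi$ must balance the higher-order contributions $\int v^j\xi\,d\sigma_\xi$ for $j\ge 2$; each of these is componentwise bounded by $\|v\|_\infty^{j-2}\|v\|_{L^2}^2\le\|v\|_{L^2}^2$. I obtain $|\int v\,\xi_i\,d\sigma_\xi|\le C\|v\|_{L^2}^2$ for every $i$, and expressing $\|v_1\|_{L^2}^2$ as a sum of squares of these inner products (normalised through $\|\xi_i\|_{L^2}^2=|\Ss^{n-1}|/n$) yields $\|v_1\|_{L^2}^2\le C\|v\|_{L^2}^4\le C\varepsilon^2\|v\|_{L^2}^2$.

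Combining the two bounds gives \eqref{eq_poinc}. The main obstacle is precisely that neither the Volume nor the Barycenter constraint annihilates $v_0$ or $v_1$ exactly: being nonlinear in $v$, they only force the low-frequency components to be \emph{quadratically small} in $\|v\|_{L^2}$. The uniform bound $\|v\|_\infty\le\varepsilon$ is what converts those quadratic remainders into contributions of order $\varepsilon\|v\|_{L^2}^2$, while the spectral gap $k(k+n-2)\ge 2n$ for $k\ge 2$ is ample enough to absorb the resulting deficit coming from the $k=0,1$ modes.
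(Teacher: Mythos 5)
Your proposal is correct and follows essentially the same route as the paper: spherical-harmonic decomposition of $v$, use of the volume and barycenter constraints (together with $\|v\|_\infty\le\varepsilon$) to show the $k=0$ and $k=1$ modes are of order $\varepsilon\|v\|_{L^2}^2$, and absorption of those modes via the spectral gap $k(k+n-2)\ge 2n$ for $k\ge 2$. The only cosmetic difference is that you treat the full $n$-dimensional degree-one eigenspace via the coordinate functions $\xi_1,\dots,\xi_n$, whereas the paper collapses the notation to a single coefficient $a_1$, but the argument is the same.
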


\begin{remark}
We also notice that for any given function $v(\xi)\in H^1(\Ss^{n-1})$ a well known Poincar\'e Inequality %\cite{} 
holds
\begin{equation}\label{eq_realpoinc}
\|Dv\|_{L^2}^2\ge (n-1)\left\|v-\bar v\right\|_{L^2}^2,
\end{equation}
where $\bar v$ is the average of $v$ over $\Ss^{n-1}$.

Unluckily inequality \eqref{eq_realpoinc} does not satisfy our needs since the constant $n-1$ is not large enough to go further in the proof of Proposition \ref{main_prop}. %(see inequality ?? below). 
Indeed we will take advantage of the fact that we are working with functions belonging to $\mathcal N(n, \varepsilon)$ which is a proper subset of $H^1(\Ss^{n-1})$, and therefore we are able to reach the better (larger) constant in \eqref{eq_poinc}. Indeed this Lemma means that we can get as close to the embedding constant $2n$ as we wish, provided $\ep$ is chosen small enough. 
\end{remark}

\begin{proof}[Proof of Lemma \ref{lem_poinc}]
As before, for the sake of simplicity, throughout the proof the constant $C$ is meant to be any constant that may be determined in terms of $n$ alone. The proof is included for completeness (with a slightly different notation can be found also in \cite{Fu1}).

We recall (see for instance \cite{Mu}) that
$v(\xi)\in L^2(\Ss^{n-1}) $ admits a Fourier expansion, in the sense that there exists 
a family of spherical harmonics $\{Y_k(\xi)\}_{k\in\N}$ which satisfy for all $k\ge 0$ 
$$-\Delta_\xi Y_k = k(k+n-2)Y_k  \quad\mbox{and}\quad \|Y_k\|_{L^2}=1,$$
such that
$$v(\xi)=\sum_{k=0}^{+\infty}a_kY_k(\xi)\qquad \xi \in {\Ss}^{n-1}$$
and the coefficients $a_k$ are the projections of $v$ onto the subspaces spannedd by $Y_k$:
$$a_k=\int_{\mathcal{S}^{n-1}}v(\xi) Y_k(\xi)\, d\sigma_\xi,$$
so that
$$\|v\|^2_{L^2}=\sum_{k=0}^{+\infty}a^2_k.$$
Notice that $Y_0=(n\omega_n)^{-1/2}$, therefore \eqref{eq_volume} implies 
$$|a_0|=(n\omega_n)^{-1/2}\left|\int_{{\Ss}^{n-1}}v(\xi)\, d\sigma_\xi\right|\le(n\omega_n)^{-1/2}\left|\int_{{\Ss}^{n-1}}v^2(\xi)\, d\sigma_\xi\right|\left(\frac{n-1}{2}+C\ep \right).$$
%Applying Holder inequality, 
Up to renaming the constant $C$, we get 
$$|a_0|\le C\varepsilon \|v\|_{L^2}.$$

Now, multiplying the identity

$$(1+v)^{n+1}=\sum_{k=0}^{n+1}\binom nkv^k,$$
 
by $Y_1$ and integrating we get  

\begin{equation}\label{eq_y1}
\left |\int_{\Ss^{n-1}}Y_1(\xi)(1+v(\xi))^{n+1}\, d\sigma_\xi\right|\ge\left |\int_{\Ss^{n-1}}Y_1(\xi)v(\xi)(n+1)\, d\sigma_\xi\right| - C\varepsilon \|v\|_{L^2}.
\end{equation}

Here we used H\"older's inequality and the fact that the integral of $Y_1$ over $\Ss^{n-1}$ is zero. 

Since in our notation the function $Y_1(\xi)=\xi\cdot\tau$ for some suitable vector $\tau$, then  the \emph{Barycenter constraint} 3.) implies that the lefthand side of \eqref{eq_y1} vanishes and 
$$|a_1|=\left |\int_{\Ss^{n-1}}Y_1(\xi)v(\xi)\, d\sigma_\xi\right| \le C\varepsilon \|v\|_{L^2}.$$

Finally we can conclude the proof observing on one hand that

$$\|v\|_{L^2}^2=\sum_{k=0}^{\infty}a_k^2=a_0^2+a_1^2+\sum_{k=2}^{\infty}a_k^2\le  C\ep \|v\|_{L^2}^2 +\sum_{k=2}^{\infty}a_k^2$$

and on the other hand

$$\|Dv\|_{L^2}^2=\sum_{k=1}^{\infty}k(k+n-2)a_k^2\ge \sum_{k=2}^{\infty}k(k+n-2)a_k^2\ge 2n \sum_{k=2}^{\infty}a_k^2.$$

\end{proof}
 
%$$a_0=\int_{\mathcal{S}^{N-1}}V\,Y_0\, d\sigma_{\xi}=(N\omega_N)^{-1/2}\int_{\mathcal{S}^{N-1}}V\, d\sigma_{\xi}=0.$$

%Accordingly we use the separation of variables $v(r,\xi)=\sum_k\raggio_k(r)Y_k(\xi)$ to solve the Poisson problem \eqref{sys_Poisson} and infer
%$$v(r,\xi)=r^{1-N/2}\sum_{k\ge1}a_k\frac{J_{\ell_k} (j_{N/2-1}\,r)}{J_{\ell_k} (j_{N/2-1})}Y_k(\xi)$$
%where
%$\ell_k=\sqrt{k(k+N-2)+(N/2-1)^2}=k+N/2-1.$

\begin{proposition}\label{main_prop}
{For any given $n\in \N$ and $\omega>0$, there exist two positive constants $K>0$ and $0<\varepsilon_0<1$ depending on $n$ and $\omega$ only, such that for all $0<\varepsilon<\ep_0$ and $\Omega\in(n,\omega,\ep)$--NS, then
\begin{eqnarray*}
\dfrac{V(\Omega^\sharp)P(\Omega)-P(\Omega^\sharp)V(\Omega)}{n\omega_n}&=&\\
f_\raggio(1)\is h_\raggio(1+v(\xi))\,(1+v)^{n-1}\sqrt{1+\frac{|Dv(\xi)|^2}{(1+v(\xi))^2}}\,d\sigma_\xi&-& \\
h_\raggio(1)\is f_\raggio(1+v(\xi))\,(1+v(\xi))^{n-1}\,d\sigma_\xi&\ge& K\is v^2\,d\sigma_\xi
\end{eqnarray*}
whenever $v\in \mathcal{N}(n,\varepsilon)$ provides the polar representation of $\partial\Omega$ and $h_\raggio$ and $f_\raggio$ are the functions defined in \eqref{eq_h} and \eqref{eq_f}.}
\end{proposition}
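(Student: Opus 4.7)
The plan is to perform a second-order Taylor expansion in $v$ of the quantity
$$\Phi(v):=f_\raggio(1)\is h_\raggio(1+v)(1+v)^{n-1}\sqrt{1+\tfrac{|Dv|^2}{(1+v)^2}}\,d\sigma_\xi\ -\ h_\raggio(1)\is f_\raggio(1+v)(1+v)^{n-1}\,d\sigma_\xi,$$
absorb all higher-order cross-terms into an error of size $C\varepsilon(v^2+|Dv|^2)$, and conclude via the sharpened Poincar\'e inequality of Lemma \ref{lem_poinc} combined with a Bessel identity.

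First, I would plug in the Taylor expansions (T1) and (T2) of Lemma \ref{lem_taylor} for $h_\raggio(1+v)$ and $f_\raggio(1+v)$, and the expansions (E1), (E2) of Lemma \ref{lem_expans} for $(1+v)^{n-1}$ and for the square root, and multiply out keeping only terms of total order $\le 2$ in $(v,Dv)$. By construction the zeroth-order contribution cancels, since both integrals produce $n\omega_n\,f_\raggio(1)h_\raggio(1)$. The first-order term collects to
$$\bigl(f_\raggio(1)h_\raggio'(1)-h_\raggio(1)f_\raggio'(1)\bigr)\is v\,d\sigma_\xi,$$
which by the Volume constraint \eqref{eq_volume} equals $-\tfrac{n-1}{2}\bigl(f_\raggio(1)h_\raggio'(1)-h_\raggio(1)f_\raggio'(1)\bigr)\|v\|_{L^2}^2$ modulo an $O(\varepsilon\|v\|_{L^2}^2)$ error. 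Combining with the remaining quadratic-in-$v$ terms and the $|Dv|^2$ contribution coming from (E2), one arrives at
$$\Phi(v)\ \ge\ A(n,\raggio)\|v\|_{L^2}^2+\tfrac{1}{2}f_\raggio(1)h_\raggio(1)\|Dv\|_{L^2}^2-C\varepsilon\bigl(\|v\|_{L^2}^2+\|Dv\|_{L^2}^2\bigr),$$
where
$$A(n,\raggio):=\tfrac{1}{2}\bigl(f_\raggio(1)h_\raggio''(1)-h_\raggio(1)f_\raggio''(1)\bigr)+\tfrac{n-1}{2}\bigl(f_\raggio(1)h_\raggio'(1)-h_\raggio(1)f_\raggio'(1)\bigr).$$

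Second, since $f_\raggio(1)h_\raggio(1)>0$ the coefficient of $\|Dv\|_{L^2}^2$ remains positive for $\varepsilon$ small, so applying the Poincar\'e inequality of Lemma \ref{lem_poinc}, which thanks to the Barycenter constraint supplies the sharp constant $2n$, yields
$$\Phi(v)\ \ge\ \bigl[A(n,\raggio)+n\,f_\raggio(1)h_\raggio(1)-C'\varepsilon\bigr]\|v\|_{L^2}^2.$$

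The main obstacle is to verify that the constant $B(n,\raggio):=A(n,\raggio)+n\,f_\raggio(1)h_\raggio(1)$ is strictly positive. Using the defining relation $h_\raggio'(t)=2\raggio f_\raggio(t)$ (so that $h_\raggio''(1)=2\raggio f_\raggio'(1)$) together with the radial ODE $z''(r)+\tfrac{n-1}{r}z'(r)=z(r)$ satisfied by $z(r)=r^{1-n/2}\I_{n/2-1}(r)$, every second and higher derivative of $z$ appearing in $B(n,\raggio)$ can be eliminated, and $B(n,\raggio)$ reduces to a polynomial in $z(\raggio)$ and $z'(\raggio)=\raggio^{1-n/2}\I_{n/2}(\raggio)$. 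The sharpness $2n$ in Lemma \ref{lem_poinc} is essential here: if only the standard constant $n-1$ of \eqref{eq_realpoinc} were available, the analogous coefficient could fail to be positive, as signalled by the translation counterexample in the introduction where $V/P$ is strictly larger on $B_\raggio(x_0)$ with $x_0\ne0$ than on $B_\raggio$. Once $B(n,\raggio)>0$ has been checked, relying on the positivity, monotonicity, and large-argument asymptotics \eqref{bessel_1}--\eqref{bessel_2} of the modified Bessel functions entering $z$ and $z'$, the conclusion of the proposition follows by choosing $K=\tfrac{1}{2}B(n,\raggio)$ and $\varepsilon_0$ small enough that $C'\varepsilon_0\le\tfrac{1}{2}B(n,\raggio)$.
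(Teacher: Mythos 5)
The overall architecture of your argument --- Taylor expanding the weighted perimeter and volume to second order in $v$, absorbing errors of size $C\varepsilon(v^2+|Dv|^2)$ via Lemmata \ref{lem_expans} and \ref{lem_taylor}, replacing the first-order term $\is v\,d\sigma_\xi$ using the volume constraint \eqref{eq_volume}, and finally invoking the sharpened Poincar\'e inequality of Lemma \ref{lem_poinc} --- matches the paper's strategy exactly, and your coefficient $B(n,\raggio)=A(n,\raggio)+n\,f_\raggio(1)h_\raggio(1)$ is, up to a factor $\tfrac12$, precisely the quantity whose positivity the paper establishes in \eqref{eq_cfinale}.

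However, you have not actually proved the positivity of $B(n,\raggio)$ for all $\raggio>0$ and $n\ge2$, and this is the genuinely hard part of the proposition. Asserting that it ``follows from the positivity, monotonicity, and large-argument asymptotics'' of the modified Bessel functions is not correct: after eliminating the second derivatives via $h_\raggio''(1)=2\raggio f_\raggio'(1)$ and the radial ODE, and performing the substitution used in the paper, one is left with the inequality
\begin{equation*}
H_n(s)=2s^2\,\I_{\nu+1}^2(s)+2(2\nu+1)s\,\I_\nu(s)\I_{\nu+1}(s)+\bigl(-2s^2+2\nu+3\bigr)\I_\nu^2(s)>0,\qquad \nu=\tfrac n2-1,
\end{equation*}
and here the term $2s^2\bigl(\I_{\nu+1}^2(s)-\I_\nu^2(s)\bigr)$ is \emph{negative} and of leading order for large $s$, since $\I_\nu>\I_{\nu+1}$ for all $s>0$. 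Soft facts (positivity, monotonicity, asymptotics) thus point in the wrong direction, and a genuine cancellation must be exhibited. The paper does this by a differential-inequality argument: the Bessel recursions \eqref{besseld}--\eqref{besseld2} give $s\,H_n'(s)-H_n(s)=4s\,\I_\nu\I_{\nu+1}+(4\nu^2+4\nu-1)\I_\nu^2\ge0$ for $n\ge3$, so $H_n/s$ is nondecreasing and $H_n$ stays positive; for $n=2$ a further differentiation of $G(s)=s\,H_2'(s)$ is required. You would need to carry out an argument of this type; leaving the positivity of $B(n,\raggio)$ as a computation ``to be checked'' is a real gap in the proof.
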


\begin{proof}[Proof of Proposition \ref{main_prop}] 

In what follows $K_1$ and $K_2$ are meant to be constants that may be determined in terms of $n$ and $\omega$ alone.

By using Lemmata \ref{lem_expans}, \ref{lem_taylor} and \ref{lem_poinc} we have
\begin{align*}
&f_\raggio(1)\is h_\raggio(1+v)\,(1+v)^{n-1}\sqrt{1+\frac{|Dv|^2}{(1+v)^2}}\,d\sigma- h_\raggio(1)\is f_\raggio(1+v)\,(1+v)^{n-1}\,d\sigma\\
\ge& \is v\left[ f_\raggio(1)h_\raggio'(1)-f_\raggio'(1)h_\raggio(1)\right] \,d\sigma\\
&+ \is \frac{v^2}{2} \left[ f_\raggio(1)h_\raggio''(1)-f_\raggio''(1)h_\raggio(1) + 2(n-1) \left( f_\raggio(1)h_\raggio'(1)-f_\raggio'(1)h_\raggio(1)\right) \right]  \,d\sigma\\
&+ \is h_\raggio(1)f_\raggio(1)\frac{|Dv|^2}{2} \,d\sigma - \varepsilon K_1\|Dv\|_{L^2}^2\\
\ge& \frac{1}{4n}\Big[ (n-1) \left( f_\raggio(1)h_\raggio'(1)-f_\raggio'(1)h_\raggio(1) \right)  + \left( f_\raggio(1)h_\raggio''(1)-f_\raggio''(1)h_\raggio(1) \right)\\
 &+2nh_\raggio(1)f_\raggio(1)-\varepsilon K_2\Big] \|Dv\|_{L^2}^2
\end{align*}
provided $\ep\le\ep_0$ and $\ep_0$ is small enough.

Since $K_2$ does not depend on $\varepsilon$ then $\ep K_2$ can be chosen arbitrarily small provided $\ep_0$ is small enough and the proof is complete if we just prove that 
\begin{align}\label{eq_cfinale}
&(n-1) \left( f_\raggio(1)h_\raggio'(1)-f_\raggio'(1)h_\raggio(1) \right)+ \left( f_\raggio(1)h_\raggio''(1)-f_\raggio''(1)h_\raggio(1) \right)\\\notag
 & +2nh_\raggio(1)f_\raggio(1)>0
\end{align}
for all $\raggio>0$ (i.e. for all $\omega>0$).

Let us define $\nu=\frac n2-1$ and for convenience let us rewrite inequality \eqref{eq_cfinale} as:  
\begin{equation}\label{eq_t}
\left[t^{(2\nu+1)}\left(f_\raggio(t)\frac{d}{dt}h_\raggio(t)-h_\raggio(t)\frac{d}{dt}f_\raggio(t)\right)\right]'+4(\nu+1)t^{(2\nu-1)}h_\raggio(t)f_\raggio(t)>0
\end{equation}
in $t=1$ and for all $\raggio>0$.

We multiply the left hand side of \eqref{eq_t} by $\raggio^{2\nu-1}$, and after the change of variables $s=\raggio t$
 the inequality becomes

\[
\begin{split}
&\frac{d}{ds}\left[s^{(2\nu+1)}\left(f_\raggio\left(\frac s\raggio\right)\frac{d}{ds}h_\raggio\left(\frac s\raggio\right)-h_\raggio\left(\frac s\raggio\right)\frac{d}{ds}f_\raggio\left(\frac s\raggio\right)\right)\right]\\
&+4(\nu+1)s^{(2\nu-1)}h_\raggio\left(\frac s\raggio\right)f_\raggio\left(\frac s\raggio\right)>0
\end{split}
\]

to be proven in $s=\raggio$ and for all $\raggio>0$.
 
According to \eqref{eq_h} and \eqref{eq_f} then

$$h_\raggio\left(\frac s\raggio\right)=s^{-2\nu}\I^2_\nu(s)$$

and

$$f_\raggio\left(\frac s\raggio\right)=s^{-2\nu}\I_\nu(s)\I_{\nu+1}(s).$$

%Here we used the well known property of derivatives of the modified Bessel functions (see \cite{}):

%\begin{equation}\label{eq_besid1}
%\I'_\nu(x)=\frac\nu x \I_\nu(x)+\I_{\nu+1}(x).
%\end{equation}

%In what follows we will also employ a similar identity

In the following we will repeatedly use \eqref{besseld} and \eqref{besseld2}, according to which 

\[
\frac {d}{ds}\left(s^{-\nu}\I_\nu\right)=s^{-\nu}\I_{\nu+1}
\]
and
\[
\frac {d}{ds}\left(s^{\nu+1}\I_{\nu+1}\right)=s^{\nu+1}\I_{\nu}.
\]
Then we can easily determine that
\begin{eqnarray*}
&&s^{(2\nu+1)} \left(f_\raggio\left(\frac s\raggio\right)\frac{d}{ds}h_\raggio\left(\frac s\raggio\right)-h_\raggio\left(\frac s\raggio\right)\frac{d}{ds}f_\raggio\left(\frac s\raggio\right)\right) \\\notag\\
&&\quad=\displaystyle{{s\,\I_\nu^2\left(s\right)\,\I_{\nu+1}^2\left(s\right)+\left(2\,
 \nu+1\right)\,\I_\nu^3\left(s\right)\,{\I_{\nu+1}}\left(s\right)-s\,
 \I_\nu^4\left(s\right)}\over{s^{2\,\nu}}}.
%=-{{x\,{\I_\nu}^3\left(x\right)\,\left({{d}\over{d\,x}}\,{\I_{\nu+1}}
 %\left(x\right)\right)-x\,{\I_\nu}^2\left(x\right)\,{\I_{\nu+1}}\left(x
 %\right)\,\left({{d}\over{d\,x}}\,{\I_\nu}\left(x\right)\right)
 %}\over{x^{2\,\nu}}}
 \end{eqnarray*}
 
% $$={{x\,{\I_\nu}^2\left(x\right)\,{\I_{\nu+1}}^2\left(x\right)+\left(2\,
 %\nu+1\right)\,{\I_\nu}^3\left(x\right)\,{\I_{\nu+1}}\left(x\right)-x\,
 %{\I_\nu}^4\left(x\right)}\over{x^{2\,\nu}}}$$
Then, differentiating again by $s$ and summing $4(\nu+1)s^{(2\nu-1)}h_\raggio\left(\frac s\raggio\right)f_\raggio\left(\frac s\raggio\right)$ we have
\begin{eqnarray*}
&&\frac{d}{ds}\left[s^{(2\nu+1)}\left(f_\raggio\frac{d}{ds}h_\raggio-h_\raggio\frac{d}{ds}f_\raggio\right)\right]+4(\nu+1)s^{(2\nu-1)}h_\raggio f_\raggio\\ \notag\\
&&\qquad=s^{-(2\nu+1)}\Big[ 2\,s^2\,{\I_\nu}\left(s\right)\,\I_{\nu+1}^3\left(s\right)+2\left(
 2\,\nu+1\right)\,s\,\I_\nu^2\left(s\right)\,\I_{\nu+1}^2\left(s
 \right)\notag\\
&&\qquad\quad+\left(\left(2\nu+3\right)-2\,s^2\right)\,\I_\nu^3\left(s
 \right)\,{\I_{\nu+1}}\left(s\right)\Big].\notag
\end{eqnarray*}

Finally \eqref{eq_cfinale} holds true for all $\raggio>0$ and $n\ge2$ provided the function
%$$s^{-2\,\nu-1}\,\left(2\,s^2\,{\I_\nu}\left(s\right)\,{\I_{\nu+1}}^3 \left(s\right)+\left(4\,\nu+2\right)\,s\,{\I_\nu}^2\left(s\right)\, {\I_{\nu+1}}^2\left(s\right)+\left(-2\,s^2+2\,\nu+3\right)\,{\I_\nu}^3 \left(s\right)\,{\I_{\nu+1}}\left(s\right)\right)=$$
% $$={{2\,s^3\,{\I_\nu}\left(s\right)\,{\I_{\nu+1}}^3\left(s\right)+\left( 2\,n-2\right)\,s^2\,{\I_\nu}^2\left(s\right)\,{\I_{\nu+1}}^2\left(s \right)+\left(\left(n+1\right)\,s-2\,s^3\right)\,{\I_\nu}^3\left(s \right)\,{\I_{\nu+1}}\left(s\right)}\over{s^{n}}}.$$
%which is positive provided
 $$H_n(s)=2\,s^2\,\I_{\nu+1}^2\left(s\right)+2\left(2\,\nu+1\right)\,s\,{\I_\nu}
 \left(s\right)\,{\I_{\nu+1}}\left(s\right)+\left(-2\,s^2+2\,\nu+3\right)\,
 {\I_\nu}^2\left(s\right)> 0$$
for all $s>0$ and $n\ge2$.
 
We can prove such an inequality for instance by differentiating with respect to $s$
$$ \frac{d}{ds}{H_n}(s)=\frac 1s\left({H_n+4s{\I_\nu}{\I_{\nu+1}}+(4\nu^2+4\nu-1){\I_\nu}^2}\right).$$
Observing that $(4\nu^2+4\nu-1)\ge 0$ if $n\ge 3$ and that $\I_\nu(s)$ and $\I_{\nu+1}(s)$ are positive for all $s>0$, 
we deduce $s {H_n}'(s)\ge {H_n}(s)$ provided $n\ge 3$. This immediately implies $H_n(s)>0$ for all $s>0$ and $n\ge 3$.

For $n=2$ we set $\displaystyle G(s)=s\frac{d}{ds}{H_2}(s),$ and differentiating once again
$$ \frac{d}{ds}G(s)=6s{\I_{\nu+1}}^2+2s{\I_\nu}^2> 0\qquad \forall s>0,$$
we get $H_2>0$ for all $s>0$.
\end{proof}

%\section{The main result}
%In order to formulate our main result we need the following definition
%\begin{definition}[\emph{$(\omega,\ep)$--spherical} sets]\label{def_nearly}
%Let $\omega>0$ and $0\le \delta\le 1$. We say that a set $\Omega\subset\R^n$ is \emph{$(\omega,\ep)$--spherical} if, $\m(\Omega)=\omega$, and there exists
%$v\in W^{1,\infty}(\Ss^{n-1})$, with $\|v\|_{\Wi}\le\delta$ and $\int_{\Ss^{n-1}}(1+v(\xi))^{n+1}\xi\,d\sigma_\xi=0$, such that, up to a translation, the boundary of $\Omega$ in polar coordinates
%$(r,\xi)\in[0,\infty]\times \Ss^{n-1}$, can be represented as $$r(\xi)=\raggio(1+v(\xi)).$$
%Here $\raggio=\left(\omega/\omega_n\right)^{1/n}$ is the radius of the ball having same measure as $\Omega$.
%\end{definition}
%We will also denote by $\mathcal{N}(\delta)$ the space of Lipschitz continuous functions $v$ on $\Ss^{n-1}$ satisfying all the hypotheses in Definition \ref{def_nearly}, i.e.: the space of functions describing in polar coordinates the boundary of some \emph{$(\omega,\ep)$--spherical} set. Notice that the condition $\int_{\Ss^{n-1}}(1+v(\xi))^{n+1}\xi\,d\sigma=0$ sets the barycenter of $\Omega$ in the origin $O$.

{We can now conclude the proof of Theorem \ref{teonearly}. For given $n\in \N$ and $\omega>0$ let $0<\ep_0<1$ and $K$ be the constant given by Proposition \ref{main_prop}. Let $0<\ep<\ep_0$,  and let $\Omega$ be any set in $(n,\omega,\ep)$--NS. As usual let $v\in \mathcal{N}(n,\ep)$ be the function providing the polar representation of $\partial\Omega$. We then use Lemma \ref{lem_stima}, Proposition \ref{main_prop}, the monotonicity of $h$ and $f$, and the boundedness of the function $v$ to get
\begin{eqnarray*}
\lambda(\Omega^{\sharp})&=& \frac{V(\Omega^\sharp)}{P(\Omega^\sharp)} \ge \frac{V(\Omega)}{P(\Omega)}+\frac{n\omega_nK\is v^2\,d\sigma_\xi}{P(\Omega^\sharp) P(\Omega)}\\
&=& \frac{V(\Omega)}{P(\Omega)}\left(1+\frac{n\omega_nK\is v^2\,d\sigma_\xi}{P(\Omega^\sharp) V(\Omega)}\right)\\
&=& \frac{V(\Omega)}{P(\Omega)}\left(1+\frac{K\is v^2\,d\sigma_\xi}{h_\raggio(1)\is f_\raggio(1+v(\xi))\,(1+v(\xi))^{n-1}\,d\sigma_\xi}\right)\\
&\ge&  \frac{V(\Omega)}{P(\Omega)}\left(1+\frac{K\is v^2\,d\sigma_\xi}{n \omega_n 2^{n-1}h_\raggio(1) f_\raggio(2) }\right),\\
&\ge& \lambda(\Omega)\left(1+\frac{K\is v^2\,d\sigma_\xi}{n \omega_n 2^{n-1}h_\raggio(1) f_\raggio(2) }\right),\\
\end{eqnarray*}}
%\begin{eqnarray*}
%\lambda(\Omega^{\sharp})&=& \frac{f_\raggio(1)}{h_\raggio(1)} \ge \frac{\is f_\raggio(1+v(\xi))\,(1+v(\xi))^{n-1}\,d\sigma_\xi}{\is h_\raggio(1+v(\xi))\,(1+v)^{n-1}\sqrt{1+\frac{|Dv(\xi)|^2}{(1+v(\xi))^2}}\,d\sigma_\xi } \\
%&&\quad+ \frac{ K \is v^2\,d\sigma_\xi
%} { h_\raggio(1) \is h_\raggio(1+v(\xi))\,(1+v)^{n-1}\sqrt{1+\frac{|Dv(\xi)|^2}{(1+v(\xi))^2}}\,d\sigma_\xi }\\
%&\ge& \lambda(\Omega)\left(1+\frac{K\is v^2\,d\sigma_\xi}{h_\raggio(1)\is f_\raggio(1+v(\xi))\,(1+v(\xi))^{n-1}\,d\sigma_\xi}\right)\\
%&\ge& \lambda(\Omega)\left(1+\frac{K\is v^2\,d\sigma_\xi}{n \omega_n 2^{n-1}h_\raggio(1) f_\raggio(2) }\right),\\
%\end{eqnarray*}
which is exactly the inequality in the statement on Theorem \ref{teonearly} provided $K$ is renamed.

\section{Proof of Theorem \ref{mainteo}}\label{s_main}

First we use the same arguments employed in the proof of Lemma \ref{lem_stima} and in particular the estimate given in \eqref{eq_stima2} to observe that if $\Omega$ is any bounded Lipschitz subset of $\R^n$ then
$$\lambda(\Omega)\le \frac{V(\Omega)}{P(\Omega)},$$
%Let us introduce the following notions of weighted volume and perimeter:
where $V(\Omega)$ and $P(\Omega)$ are the weighted volume and perimeter introduced in Section \ref{introduction}. Namely, using \eqref{besseld}, we have
\begin{equation}\label{eq_wv}
\displaystyle V(\Omega)=  \int_\Omega |x|^{2-n}\left(\I^2_{\frac{n}{2}-1}(|x|)+\I^2_{\frac{n}{2}}(|x|)\right)dx,
\end{equation}
and
\begin{equation}\label{eq_wp}
\displaystyle P(\Omega)=\int_{\partial \Omega} |x|^{2-n}\,\I^2_{\frac{n}{2}-1}(|x|)\,d\h.
\end{equation}
We observe that the notion of weighted volume and weighted perimeter in \eqref{eq_wv} and \eqref{eq_wp} can be extended to the whole class of sets of finite perimeter, provided topological boundary in \eqref{eq_wp} is replaced by the reduced boundary $\partial^*\Omega$ (see \cite{Ma}).

%In addition we have $$\lambda(\Omega^\sharp)=\frac{V(\Omega^\sharp)}{P(\Omega^\sharp)}.$$
%
%This suggests to look for the inequality 
%
%\begin{equation}\label{key_ineq}
%\frac{V(\Omega)}{P(\Omega)}\le\frac{V(\Omega^\sharp)}{P(\Omega^\sharp)}
%\end{equation}
%
%which unfortunately in general is false. Indeed if we simply consider any $x_0\in\R^n$ with $x_0\ne O$ we already have 
%
%$$\frac{V(\Omega^\sharp)}{P(\Omega^\sharp)}\le\frac{V(\Omega^\sharp+x_0)}{P(\Omega^\sharp+x_0)}.$$
%
%This emphasizes that, even if $\lambda(\cdot)$ is invariant under translations, the same is not true for the ratio $\displaystyle \frac{V(\cdot)}{ P(\cdot)}$.
%As a consequence we have to carefully choose a reference system where hopefully \eqref{key_ineq} holds true.

%In the following we will refer to $P(\cdot)$ and $V(\cdot)$ also as \emph{weighted perimeter} and \emph{weighted volume} respectively, with a clear meaning. 
For any measurable set $E$ we shall denote by $X(E)$ its barycenter
%. Furthermore, for all $r>0$ by $B_r$ we shall denote the ball of radius $r$ centered in the origin %Throughout the section, each time $n\in\N$ and $\omega>0$ are assigned, then
and, for any given $\omega>0$, we put % If $\m(\Omega)=\omega$ then $\Omega^\sharp=B_R$. 
$$\gamma_\omega=\frac{P(B_\raggio)}{V(B_\raggio)},$$
the reciprocal of the eigenvalue $\lambda(B_\raggio)$, remembering that $\raggio=(\omega/\omega_n)^{1/n}$.
%denotes the radius of the ball having measure $\omega$. Finally it will be more convenient to replace the notation $\Omega^\sharp$ by $B_\raggio$, for those sets $\Omega$ such that $\m(\Omega)=\omega$. 
Theorem \ref{mainteo} is then a consequence of the following result.

\begin{theorem}\label{teo_weighted}
For any given $n\in\N$ and $\omega>0$ there exists a positive constant $\delta$ such that, in the class of finite perimeter sets, the ball $B_\raggio$ is the unique minimizer of  

\begin{equation}\label{constrained}
\min_\Omega\{J_0(\Omega): \m(\Omega)=\omega,\>B_{\raggio-\delta}\subset\Omega\subset B_{\raggio+\delta},\>X(\Omega)=0\},
\end{equation}
where
\begin{equation}
J_0(\Omega)=P(\Omega)-\gamma_\omega V(\Omega).
\end{equation}
\end{theorem}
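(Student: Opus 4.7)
The plan is to implement the Cicalese--Leonardi / Acerbi--Fusco--Morini penalization strategy outlined in the introduction: replace the constrained problem \eqref{constrained} by an unconstrained, penalized one, exploit regularity of almost-minimizers of the Euclidean perimeter, and conclude via Theorem~\ref{teonearly} on the resulting nearly spherical set. Concretely, for parameters $\Lambda_1,\Lambda_2,\Lambda_3>0$ to be chosen (depending on $\omega$ and $n$), I consider over all finite-perimeter sets $\Omega$ the functional
\begin{equation*}
J(\Omega) = J_0(\Omega) + \Lambda_1\,|X(\Omega)| + \Lambda_2\,|\m(\Omega)-\omega| + \Lambda_3\bigl(V(\Omega\setminus B_{\raggio+\delta}) + V(B_{\raggio-\delta}\setminus\Omega)\bigr).
\end{equation*}
Since $J(B_\raggio)=0$, every minimizer $\Omega^*$ of $J$ satisfies $J(\Omega^*)\leq 0$; moreover, every competitor for \eqref{constrained} makes all four penalty terms vanish identically, so unique minimality of $B_\raggio$ for $J$ over finite-perimeter sets will immediately imply unique minimality for the constrained problem.

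Existence of $\Omega^*$ follows from the direct method in $BV$: since the weight $z^2=|x|^{2-n}\I^2_{n/2-1}(|x|)$ is smooth and bounded above and below on bounded regions, $P$ is lower semicontinuous and $V$, $X(\cdot)$, $\m(\cdot)$ together with the annulus penalty are continuous under $L^1$-convergence of characteristic functions; compactness of a minimizing sequence is ensured by the uniform perimeter bound provided by $J\leq 0$ and the $\Lambda_3$-term trapping mass near $B_\raggio$. I then verify that $\Omega^*$ is an almost-minimizer of the Euclidean perimeter in the sense of \eqref{almminp}: for any $\tilde\Omega$ with $\Omega^*\dsim\tilde\Omega\subset\subset B_r(x_0)$, each of the quantities $V(\Omega)$, $|X(\Omega)|$, $|\m(\Omega)-\omega|$ and the annulus term changes by at most $Cr^n$ under such local modifications, while $P$ differs from the Euclidean perimeter by a uniformly positive smooth factor. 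The minimality inequality $J(\Omega^*)\leq J(\tilde\Omega)$ then yields \eqref{almminp} with constants depending only on $n$, $\omega$, $\delta$, and $\Lambda_i$, so $\partial^*\Omega^*$ is a $C^{1,1/2}$ hypersurface.

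The third step is to show that, for $\delta\to 0$, the minimizer $\Omega^*_\delta\to B_\raggio$ in $L^1$ and that all penalty terms vanish at $\Omega^*_\delta$. Convergence follows from $J(\Omega^*_\delta)\leq 0$, positivity of the penalties, and a coercivity estimate showing that any $L^1$-accumulation point is forced to be $B_\raggio$; combined with uniform almost-minimality, the regularity result quoted in the introduction upgrades this to $(n,\omega,\ep_\delta)$--NS-representability with $\ep_\delta\to 0$, and the polar function $v_\delta$ converges to $0$ in $C^{1,\alpha}(\Ss^{n-1})$. Fixing $\Lambda_1,\Lambda_2,\Lambda_3$ large enough (comparable to $\omega$-dependent constants extracted from translation, dilation and truncation competitors) then forces all three penalty terms to vanish at $\Omega^*_\delta$: this is the equivalence between the constrained and penalized problems. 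Once the penalties vanish, $J_0(\Omega^*_\delta)=J(\Omega^*_\delta)\leq 0$, while Theorem~\ref{teonearly} applied to the nearly spherical set $\Omega^*_\delta$ gives $J_0(\Omega^*_\delta)\geq C(n,\omega)\,\|v_\delta\|_{L^2(\Ss^{n-1})}^2\geq 0$, with equality iff $v_\delta\equiv 0$. Hence $\Omega^*_\delta=B_\raggio$ for $\delta$ small, and the conclusion of Theorem~\ref{teo_weighted} follows.

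The main obstacle is precisely this equivalence between the constrained and penalized problems, i.e.\ proving that for $\Lambda_i$ sufficiently large the penalties are inactive at $\Omega^*_\delta$. Because neither $V$ nor $P$ is translation- or scale-invariant, the naive competitors used in the Euclidean isoperimetric reductions of \cite{CL,AFM} do not transfer verbatim: correcting a nonzero barycenter by a rigid translation, a volume defect by a homothety, or an annulus violation by truncation all produce explicit changes in $J_0$ that must be carefully estimated against the corresponding constraint violation; the $C^{1,\alpha}$-proximity of $\Omega^*_\delta$ to $B_\raggio$ supplied by the previous step is what ultimately makes these local perturbation arguments viable, but it is the technically delicate heart of the proof.
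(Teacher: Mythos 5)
Your high-level plan is the same as the paper's: penalize the constraints, show the penalized minimizer is an almost-minimizer of the Euclidean perimeter, upgrade to nearly-spherical, and close with Theorem~\ref{teonearly}. But the proposal stops exactly at the step that carries the mathematical content, and the order in which you propose to argue introduces a circularity the paper avoids.

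The substantive gap is the equivalence between the constrained and penalized problems (the paper's Proposition~\ref{minima}). You correctly observe that rigid translations and homotheties fail as competitors because $P$ and $V$ are neither translation- nor scale-invariant, and you call the replacement construction ``the technically delicate heart of the proof'' --- but you do not supply it. The paper does: to correct the barycenter it removes weighted mass from a set $E_{\bar t}$ on the far side of the defect and adds weighted mass in a set $F_{\tilde t}$ on the near side (both obtained as parallel sets of unions of fixed-radius balls), and controls the change in $P$ by integrating the divergence of the vector field $z^2\nabla\dist$ over the symmetric difference; to correct the measure it intersects (or unions) with a ball $B_{\tilde\raggio}$; to correct the annulus constraint it passes to $(\Omega\cap B_{\raggio+\delta})\cup B_{\raggio-\delta}$ and controls the change via a cut-off divergence estimate. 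Each step gives an explicit $\Lambda_i$ depending only on $n,\omega,\delta_0$. Without some such concrete construction the penalty constants $\Lambda_i$ cannot be fixed and the claim that the penalties vanish at a minimizer is unsubstantiated.

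There is also an ordering problem. You propose: existence $\Rightarrow$ almost-minimality and $C^{1,\alpha}$ regularity $\Rightarrow$ $L^1$-convergence to $B_\raggio$ $\Rightarrow$ penalties vanish. But the paper first proves the equivalence (Proposition~\ref{minima}) \emph{without} using any regularity, by constructions valid for arbitrary finite-perimeter competitors, and only then (Proposition~\ref{p_unconstrained}) invokes almost-minimality --- using crucially that minimizers already satisfy $B_{\raggio-\delta}\subset\Omega\subset B_{\raggio+\delta}$ to get a uniform almost-minimality constant and to discard modification balls far from the origin, where the exponentially growing weight would otherwise spoil uniformity. If you instead try to derive regularity before knowing the minimizer is confined near $B_\raggio$, your almost-minimality constant is not controlled, and your later appeal to $C^{1,\alpha}$-closeness to justify the penalty estimates becomes circular. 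This can likely be untangled (the $\Lambda_3$ term forces confinement once $\Lambda_3>\gamma_\omega$), but you would still need the explicit competitor estimates, and at that point the regularity input is unnecessary --- which is precisely why the paper does Proposition~\ref{minima} first, by hand, without it.
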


According to what we observed in the introduction concerning the choice of the reference system, without the barycenter condition $X(\Omega)=0$ the result is false. In principle this mandates the additional hypothesis in the statement of Theorem \ref{mainteo} 
%\begin{itemize}
%\item[{(H3)}] 
$$X(\Omega)=0.$$ 
%\end{itemize}
However, by trivial arguments one can easily prove that, if Theorem \ref{mainteo} is true under such an additional constraint then, possibly after taking a smaller positive $\delta$, it holds true even without it.

%What we are going to prove in the rest of the section is that, for any given $n\in\N$ and $\omega>0$ there exists a positive constant $\delta$ such that, in the class of Lipschitz sets, the ball $B_\raggio$ is the unique minimizer of  
%
%\begin{equation}\label{constrained}
%\min_\Omega\{J_0(\Omega): \m(\Omega)=\omega,\>B_{\raggio-\delta}\subset\Omega\subset B_{\raggio+\delta},\>X(\Omega)=0\},
%\end{equation}
%where
%\begin{equation}
%J_0(\Omega)=P(\Omega)-\gamma_\omega V(\Omega).
%\end{equation}
%
%According to what we have seen so far, this would conclude the proof of Theorem \ref{mainteo}. 

%Our aim is to study, for $\delta>0$ and $\omega>0$, the following constrained minimization problem:
%\begin{equation}\label{constrained}
%\min_\Omega\{J_0(\Omega): |\Omega|=\omega,\>B_{\raggio-\delta}\subset\Omega\subset B_{\raggio+\delta},\>X(\Omega)=0\},
%\end{equation}
%where
%\begin{equation}
%J_0(\Omega)=P(\Omega)-\gamma_\omega V(\Omega),
%\end{equation}
%$X(\Omega)$ is the barycenter of $\Omega$, $\raggio = (\omega/\omega_n)^{1/n}$ and $\gamma_\omega=\frac{P(B_\raggio)}{V(B_\raggio)}$

The proof of Theorem \ref{teo_weighted} is done in two steps : Proposition \ref{minima} and Proposition \ref{p_unconstrained}. We start by showing that the minimizers for \eqref{constrained} are actually the minimizers for an unconstrained minimum problem involving a suitable penalized functional, namely, we have:
%\begin{equation}
%J(\Omega)=P(\Omega)-\lambda V(\Omega)+\Lambda_1|\,|\Omega|-\omega|+\Lambda_2\,|X(\Omega)|+\Lambda_3(V(\Omega\backslash B_{\raggio+\delta})+V(B_{\raggio-\delta}\backslash \Omega)).
%\end{equation}

\begin{proposition}\label{minima}
For every $n\in \N$ and $\omega>0$ there exist some positive constants, $\delta_0$, $\Lambda_1$, $\Lambda_2$, $\Lambda_3$, %which depend on $\omega$ and $n$ only, 
such that, for every $0<\delta<\delta_0$, the unconstrained functional
\begin{equation}\label{J}
J(\Omega)=J_0(\Omega)+\Lambda_1\,|X(\Omega)|+\Lambda_2\,|\m(\Omega)-\omega|+\Lambda_3(V(\Omega\backslash B_{\raggio+\delta})+V(B_{\raggio-\delta}\backslash \Omega)),
\end{equation}
admits minimizers in the class of the sets with finite measure and perimeter, and $\Omega$ is a minimizer for \eqref{constrained} if and only if is a minimizer for \eqref{J}.
%\begin{eqnarray}
%&\displaystyle\min_\Omega&\{P(\Omega)-\lambda V(\Omega): |\Omega|=\omega,\>B_{\raggio-\delta}\subset\Omega\subset B_{\raggio+\delta},\>X(\Omega)=0\}=\\ \notag\\
%&\displaystyle=\min_\Omega&\{P(\Omega)-\lambda V(\Omega)+\Lambda_1|\,|\Omega|-\omega|+\Lambda_2\,|X(\Omega)|+\Lambda_3(V(\Omega\backslash B_{\raggio+\delta})+V(B_{\raggio-\delta}\backslash \Omega))\},
%\end{eqnarray}
%where   and .
\end{proposition}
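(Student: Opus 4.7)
The plan is to follow the penalization strategy of Cicalese--Leonardi \cite{CL} and Acerbi--Fusco--Morini \cite{AFM}, already alluded to in the introduction. The proof splits into three blocks: existence of minimizers of $J$ via the direct method, a routine reduction of the equivalence statement to a single claim, and quantitative competition arguments forcing any minimizer of $J$ to satisfy each constraint defining \eqref{constrained}.

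For existence I would use that $J(B_{\raggio})=0$ (each penalty vanishes and $J_0(B_{\raggio})=0$ by the very definition of $\gamma_\omega$), so a minimizing sequence satisfies $J(\Omega_h)\le 1$ eventually. The exponential growth of $\I_\nu$ from \eqref{bessel_2} makes the density in \eqref{eq_wv} exponentially large away from the origin, so provided $\Lambda_3>\gamma_\omega$ the bound $(\Lambda_3-\gamma_\omega)V(\Omega_h\setminus B_{\raggio+\delta})\le C$ extracted from $J(\Omega_h)\le 1$ confines $\Omega_h$ essentially in some fixed ball $B_M$ and bounds $\m(\Omega_h)$. The positive lower bound of the weight on $B_M$ then turns $J_0(\Omega_h)\le C$ into a uniform bound on $\per(\Omega_h)$; BV-compactness extracts an $L^1$-convergent subsequence, and lower semicontinuity of the weighted perimeter (with continuous positive weight $z^2$), together with continuity of $V$, $X$ and $\m$ under $L^1$-convergence of uniformly bounded sets, yield $J(\Omega^{\ast})\le\liminf_h J(\Omega_h)$.

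The equivalence of the two minimum problems becomes immediate once I show that every minimizer of $J$ is feasible for \eqref{constrained}: since $J\ge J_0$ always and $J=J_0$ on the feasible class, a $J$-minimizer $\Omega$ in the feasible class gives $J_0(\Omega)=J(\Omega)\le J(\tilde\Omega)=J_0(\tilde\Omega)$ for any feasible $\tilde\Omega$, while conversely a feasible $J_0$-minimizer $\Omega'$ satisfies $J(\Omega')=J_0(\Omega')\le J_0(\Omega)=J(\Omega)\le J(\Omega')$. The heart of the argument is thus the activation of each penalty at an arbitrary minimizer $\Omega$ of $J$. For the $\Lambda_3$-inclusion $\Omega\subset B_{\raggio+\delta}$, I would apply Fubini on $r\in(\raggio+\delta,\raggio+2\delta_0)$ to select a radius $r^{\ast}$ with $\h(\partial B_{r^{\ast}}\cap\Omega)\le \delta_0^{-1}\m(\Omega\cap(B_{\raggio+2\delta_0}\setminus B_{\raggio+\delta}))$, and compare $\Omega$ with $\Omega\cap B_{r^{\ast}}$: the competitor saves $\Lambda_3 V(\Omega\setminus B_{r^{\ast}})$ in penalty while costing at most $c_1\h(\partial B_{r^{\ast}}\cap\Omega)$ in weighted perimeter (with $c_1=\sup_{B_{\raggio+2\delta_0}}z^2$) plus a constant multiple of $V(\Omega\setminus B_{r^{\ast}})$ from the remaining terms, where the lower bound $c_2=\inf_{B_{\raggio+2\delta_0}\setminus B_{\raggio+\delta}}z^2>0$ is used to control $\m$ by $V$; choosing $\Lambda_3$ large enough yields a strict improvement, contradicting minimality. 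The reverse inclusion $B_{\raggio-\delta}\subset\Omega$ is symmetric via a filling-in at a radius $r^{\ast}\in(\raggio-2\delta_0,\raggio-\delta)$. The volume constraint $\m(\Omega)=\omega$ is then imposed by adding or removing a tiny ball of radius $\eta$ inside the safe region $B_{\raggio-\delta}$ and balancing the $O(\eta^{n-1})$ perimeter change against the $\Theta(\eta^{n})$ penalty gain from $\Lambda_2$. Finally, $B_{\raggio-\delta}\subset\Omega\subset B_{\raggio+\delta}$ with $\m(\Omega)=\omega$ forces $|X(\Omega)|=O(\delta)$ by direct computation (use radial symmetry of $B_{\raggio-\delta}$); a translation by $-X(\Omega)$ is then small, keeps the set in a slightly larger shell where $z$ is smooth, and decreases $J$ by $(\Lambda_1-C')|X(\Omega)|$ up to higher order, where $C'$ is a translation Lipschitz constant for $J_0$ on this shell; taking $\Lambda_1>C'$ forces $X(\Omega)=0$.

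The main obstacle I anticipate is the delicate interaction between the three penalties: competitors used to drive one penalty to zero generally perturb the others, so the multipliers must be chosen in a strict hierarchy ($\Lambda_3$ first, then $\Lambda_2$, then $\Lambda_1$) and $\delta_0$ shrunk at the very end to absorb the cross-interactions. This book-keeping, essentially the content of the constructions in \cite{CL, AFM} adapted to the weighted functional, is the technical heart of the argument; the only non-routine input is keeping all estimates uniform in $\delta<\delta_0$, which follows from the analyticity and strict positivity of $z$ and the fact that $c_1,c_2$ depend only on $n,\omega,\delta_0$.
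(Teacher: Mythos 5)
There are two genuine flaws in the competition arguments. First, the ``tiny ball'' argument for the volume penalty cannot work: if $|\m(\Omega)-\omega|=\epsilon$ is small, a ball of radius $\eta\sim\epsilon^{1/n}$ fixes the measure at a weighted-perimeter cost $\sim\eta^{n-1}\sim\epsilon^{(n-1)/n}$, which \emph{dominates} the penalty gain $\Lambda_2\,\epsilon$ as $\epsilon\to 0$ for every fixed $\Lambda_2$ (this is just the isoperimetric inequality applied to the removed set). The paper avoids this by choosing the competitor $\tilde\Omega=\Omega\cap B_{\tilde\raggio}$ (or $\Omega\cup B_{\tilde\raggio}$) with $\tilde\raggio$ close to $\raggio$ such that $\m(\tilde\Omega)=\omega$, and controlling $|P(\tilde\Omega)-P(\Omega)|$ by the divergence theorem applied to the vector field $|x|^{2-n}\I^2_{\frac{n}{2}-1}(|x|)\,x/|x|$, which yields a \emph{linear} bound $C\,\m(\Omega\dsim\tilde\Omega)$ rather than the super-linear bound from the isoperimetric inequality; the sphere $\partial B_{\tilde\raggio}$ has macroscopic radius so its area does not blow up.

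Second, the proposed hierarchy $\Lambda_3\to\Lambda_2\to\Lambda_1$ together with a translation competitor for the barycenter is circular. Translating $\Omega$ by $-X(\Omega)$ generically pushes a set of measure $\Theta(|X(\Omega)|)$ outside $B_{\raggio+\delta}$, activating the $\Lambda_3$ penalty by $\Theta(\Lambda_3|X(\Omega)|)$; so the translation argument needs $\Lambda_1\gtrsim\Lambda_3$. But the cut-and-fill competitor used to enforce $B_{\raggio-\delta}\subset\Omega\subset B_{\raggio+\delta}$ perturbs the barycenter by an amount comparable to $V(\Omega\setminus B_{\raggio+\delta})+V(B_{\raggio-\delta}\setminus\Omega)$, so that argument needs $\Lambda_3\gtrsim\Lambda_1$; shrinking $\delta_0$ does not break this circle because both sides scale linearly. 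The paper's Claim~1 sidesteps the issue by never translating: it cuts a small piece of $\Omega$ from the $\raggio\pm\delta$ annulus on the heavy side and fills an equal-measure piece on the light side (the sets $E_{\bar t},\,F_{\tilde t}$), which keeps the competitor inside $B_{\raggio+\delta}$, outside $B_{\raggio-\delta}$, and at measure $\omega$ — so the barycenter penalty is decreased without touching $\Lambda_2$ or $\Lambda_3$, and the constants can be fixed in the non-circular order $\Lambda_1\to\Lambda_2\to\Lambda_3$, each claim relaxing one constraint at a time. Your existence argument and the formal reduction of the equivalence to the activation of the penalties are both fine.
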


\begin{proof}
We proceed by steps.

\vskip.4cm
\noindent {\bf Claim 1.} %There exist some positive constants, $\delta_0$, 
There exist two positive constants $\Lambda_1$ and $\delta_0$, which depends on $\omega$ and $n$ only, such that, for all $0<\delta<\delta_0%\delta_0
$,
there exists minimizers for the problem
\begin{equation}\label{J1}
\min_\Omega\{J_0(\Omega)+\Lambda_1\,|X(\Omega)|:\m(\Omega)=\omega,\>B_{\raggio-\delta}\subset\Omega\subset B_{\raggio+\delta}\}
\end{equation}
in the class of finite perimeter sets, and every minimizer is also a minimizer for \eqref{constrained}.
\vskip.3cm

Since the existence of minimizers for \eqref{J1} is trivial, it is enough to show that there exist positive constants $\delta_0$ and $\Lambda_1$ depending on $n$ and $\omega$ only, such that whenever for some $a>0$ and $\delta<\delta_0$ a set $\Omega$ satisfies
\begin{enumerate}[(a)]
\item $\m(\Omega)=\omega$, 
\item $B_{\raggio-\delta}\subset\Omega\subset B_{\raggio+\delta}$
\item $X(\Omega)=a \,e_1 $, with $e_1=(1,0,\dots,0)$
\end{enumerate}
then it is possible to find a set $\tilde\Omega$ satisfying the constraint (a) and (b) and moreover $J_0(\tilde\Omega)+\Lambda_1\,|X(\tilde\Omega)|< J_0(\Omega)+\Lambda_1\,|X(\Omega)|$.

Firstly we show that there exists $\delta_0$ small enough, such that for any $\Omega$ satisfying (a),(b), and (c) for some $\delta<\delta_0$, then certainly 
$$\m\{x\in(\Omega\setminus B_{\raggio-\delta});\;x\cdot e_1>6\delta_0\}>0,$$
$$\m\{x\in(B_{\raggio+\delta}\setminus \Omega);\;x\cdot e_1<-6\delta_0\}>0.$$
%\raggiooughly speaking we quantify the trivial fact that, if the annuls $B_{\raggio+\delta}\setminus B_{\raggio-\delta$ is narrow enough part of the volume of $\Omega$ has to be can not concentrate around 
Indeed we have 
\begin{itemize}
\item $\m(B_{\raggio+\delta}\setminus B_{\raggio-\delta})=2 n \omega_n \raggio^{n-1}\delta+O(\delta^2)$
\item $\m(\Omega\setminus B_{\raggio-\delta})=n \omega_n \raggio^{n-1}\delta+O(\delta^2)$
\item $\m\{x\in(B_{\raggio+\delta}\setminus B_{\raggio-\delta});\; |x\cdot e_1|<6\delta_0\}=2 (n-1) \omega_{n-1} \raggio^{n-2}(\delta+O(\delta^2))(6\delta_0+O(\delta_0^2))$
\end{itemize}

Therefore for $\delta_0$ small enough
$$\frac{9}{10}n \omega_n \raggio^{n-1}\delta\le \m(\Omega\setminus B_{\raggio-\delta})\le \frac{11}{10}n \omega_n \raggio^{n-1}\delta$$
%$$\frac{9}{10}n \omega_n \raggio^{n-1}\delta\le 
$${ \frac{9}{10}n \omega_n \raggio^{n-1}\delta\le}\frac12\m(B_{\raggio+\delta}\setminus B_{\raggio-\delta})\le \frac{11}{10}n \omega_n \raggio^{n-1}\delta$$
$$\m\{x\in(B_{\raggio+\delta}\setminus B_{\raggio-\delta});\; |x\cdot e_1|<6\delta_0\}\le \frac{3}{10}n \omega_n \raggio^{n-1}\delta.$$

As a consequence assuming $$\m\{x\in(\Omega\setminus B_{\raggio-\delta});\;x\cdot e_1>6\delta_0\}=0,$$
then $$\m\{x\in(\Omega\setminus B_{\raggio-\delta});\;x\cdot e_1<-6\delta_0\}\ge \frac{6}{10}n \omega_n \raggio^{n-1}\delta\ge \frac{1}{2}\m(\Omega\setminus B_{\raggio-\delta}),$$
in contradiction with (c). 

Similarly if we assume $$\m\{x\in(B_{\raggio+\delta}\setminus \Omega);\;x\cdot e_1<-6\delta_0\}=0$$
then 
$$\m\{x\in(\Omega\setminus B_{\raggio-\delta});\;x\cdot e_1<-6\delta_0\}\ge\frac{6}{10}n \omega_n \raggio^{n-1}\delta\ge \frac{1}{2}\m(\Omega\setminus B_{\raggio-\delta})$$
again contradicting (c).

From now on we consider $\delta_0$ fixed in term of $n$, $\omega$, and in what follows we name $C_i$, $i=1,2,...$, generic constants which depends on $n$, $\omega$ alone. 

Let us denote by $E_0$ the union of all balls of radii $3\delta_0$ not intersecting $B_{\raggio-\delta}\cap \{x\in \R^n;\,x\cdot e_1 \le 3\delta_0\}$, and %and center $x$ satisfying $x\cdot e_1 >6\delta_0$ and $|x|\ge R-\delta+3\delta_0$, and 
by $F_0$ the union of all balls of radii $3\delta_0$ included in $B_{\raggio+\delta}\cap \{x\in\R^n;\, x\cdot e_1 < 0\}$.
%$$E=\bigcup B_{3\delta_0}{x\in\R^n;x\cdot e_1\ge6\delta_0} $$

Then we define $E_t=\{x\in E_0; \dist(x,\partial E_0)>t, \,t>0\}$ and  $F_t=\{x\in F_0; \dist(x,\partial F_0)>t, \,t>0\}$, the level sets of the distance functions from the boundary of $E_0$ and $F_0$ respectively. 
Clearly, regardless the choice of $\Omega$ satisfying (a), (b), (c) we have 
$$\m(E_0\cap \Omega)>0$$ 
$$\m(F_0\setminus \Omega)>0$$
$$\m(E_{2\delta}\cap \Omega)=0$$ 
$$\m(F_{2\delta}\setminus \Omega)=0$$

We will prove that for any given $\Omega$ satisfying (a), (b), and (c) there exists a positive constant $\sigma_0$ such that for all $0<\sigma<\sigma_0$ we can find $\bar t , \tilde t \in[0,2\delta]$ depending on $\sigma$
such that the following properties hold:
\begin{enumerate}[(i)]
%\item $E_{\bar t}\subset \{x\cdot e_1>3\delta_0\}$
%\item $F_{\tilde t}\subset \{x\cdot e_1<0\}$
\item $\m(\Omega\cap E_{\bar t})=\m(F_{\tilde t}\setminus \Omega)=\sigma$
\item $\m(E_{\bar t}\cap B_{\raggio-\delta})=\m(F_{\tilde t}\setminus B_{\raggio+\delta})=0$
\item $P(\Omega\setminus E_{\bar t})< P(\Omega) + C_1\sigma$
\item $P(\Omega\cup F_{\tilde t})< P(\Omega) + C_1\sigma$
\end{enumerate}

Properties (i)--(ii) follow at once by the definition of $E_t$ and $F_t$. 

Concerning (iii) we observe that, for $t\in(0,2\delta)$,  $E_t$ and $F_t$ are two families of $C^{1,1}$ sets with mean curvature bounded in terms of $\delta_0$ and $\omega$ and $n$ alone. Moreover, if for $x\in E_0$ such that $\dist (x,\partial E_0)\le 2\delta_0$ we set $T_{E}(x)=\nabla \dist(x,\partial E_0)$, then for all $x\in\partial E_t $ the vector $T_E(x)$ is the inner unit normal to $\partial E_t$ at $x$. Since for $\dist (x,\partial E_0)\le 2\delta_0$ we have
$$\left|{\rm div}(|x|^{2-n}\,\I^2_{\frac{n}{2}-1}(|x|)T_E(x))\right|\le\left| \nabla(|x|^{2-n}\,\I^2_{\frac{n}{2}-1}(|x|))\right|+ |x|^{2-n}\,\I^2_{\frac{n}{2}-1}(|x|)\left|{\rm div}T_E\right|\le C_1$$ 
we have

$$|P(\Omega\setminus E_{\bar t})-P(\Omega)| \le \left|\int_{\Omega\cap E_{\bar t}} {\rm div}\left(|x|^{2-n}\,\I^2_{\frac{n}{2}-1}(|x|)T_E(x)\right) dx \right|\le C_1\m(\Omega\cap E_{\bar t}).$$

We can argue in the same way to deduce (iv) by using $F_0$ in place of $E_0$ and by defining the vector field $T_F(x)=\nabla \dist(x,\partial F_0)$ for $\dist (x,\partial F_0)\le 2\delta_0$. We have 
$$|P(\Omega\cup F_{\tilde t})-P(\Omega)| \le \left|\int_{ F_{\tilde t} \setminus \Omega} {\rm div}\left(|x|^{2-n}\,\I^2_{\frac{n}{2}-1}(|x|)T_F(x)\right) dx \right|\le C_1\m(F_{\tilde t}\setminus\Omega).$$

%where the constant $C$ can be explicitly computed in terms of $n$, $\omega$ and $\delta_0$ alone.
Finally we observe that $V(\Omega\cup F_{\tilde t})> V(\Omega)$, and using \eqref{bessel_1}--\eqref{bessel_2}, then we also have $V(\Omega\setminus E_{\bar t})\ge V(\Omega) - C_2\sigma$. \\
Once we have constructed the sets $E_{\bar t}$ and $F_{\tilde t}$ the proof of the claim follows at once from (i)--(iv). Indeed let 
$$\tilde\Omega=(F_{\tilde t}\cup\Omega)\setminus E_{\bar t},$$ we have by construction
$$|X(\tilde\Omega)|\le |X(\Omega)|-C_3\sigma+O(\sigma^2)\le|X(\Omega)|-C_4\sigma$$
provided $\sigma$ is smaller than a positive quantity $\sigma_0$ which may depend on $\Omega$. Eventually we have
\begin{eqnarray*}
J_0(\tilde\Omega)+\Lambda_1\,|X(\tilde\Omega)|&=& P(\tilde\Omega)-\gamma_\omega V(\tilde\Omega)+\Lambda_1\,|X(\tilde\Omega)|\\
%&\le& P(\Omega) + |P(\Omega\setminus E_{\bar t})-P(\Omega)|+ |P(\Omega\cup F_{\tilde t})-P(\Omega)|\\
%&&-\gamma_\omega V(\Omega)+ \gamma_\omega V(\Omega)+\Lambda_1\,|X(\tilde\Omega)|\\
& \le& P(\Omega)-\gamma_\omega V(\Omega)+\Lambda_1\,|X(\Omega)|+(2 C_1+C_2 \gamma_\omega-C_4\Lambda_1)\sigma\\
& <& J_0(\Omega)+\Lambda_1\,|X(\Omega)|\\
\end{eqnarray*}
provided $\Lambda_1$ is large enough. Observe that $\Lambda_1$ can be explicitly computed in terms of $n$, $\omega$ and $\delta_0$ alone.

\vskip.4cm
\noindent {\bf Claim 2.} Let $\delta_0$ and $\Lambda_1$ be the constant given in Claim 1, then there exists a positive constant $\Lambda_2$, which depends on $\omega$ and $n$ only, such that, for every $0<\delta<\delta_0$,
 the problem
\begin{equation}\label{J2}
\min_\Omega\{J_0(\Omega)+\Lambda_1|X(\Omega)|+\Lambda_2\,|\,|\m(\Omega)-\omega|:B_{\raggio-\delta}\subset\Omega\subset B_{\raggio+\delta}\}
\end{equation}
admits minimizers in the class of finite perimeter sets, and every minimizer is also a minimizer for \eqref{J1}.
\vskip.3cm
As in Claim 1 existence of minimizers is trivial and arguing as before, let us assume that for some $\delta\in(0,\delta_0)$, a set $\Omega$ satisfies $$B_{\raggio-\delta}\subset\Omega\subset B_{\raggio+\delta}$$ and $$\m(\Omega)>\omega.$$
In this case there exists $\tilde \raggio\in(\raggio-\delta,\raggio+\delta)$ such that $\m(\Omega\cap B_{\tilde \raggio})=\omega$. 
If we set $$\tilde \Omega=\Omega\cap B_{\tilde \raggio}$$ then
$$|P(\tilde \Omega)-P(\Omega)|\le \left|\int_{\Omega\setminus \tilde\Omega} {\rm div}\left(|x|^{2-n}\,\I^2_{\frac{n}{2}-1}(|x|)\frac{x}{|x|}\right) dx \right|\le C_5\m(\Omega\setminus \tilde\Omega).$$
As in the previous claim by $C_i$ we denote positive constants depending just on $\delta_0$, $n$, and $\omega$.\\
Using %the fact that the weighted volume $V$ is a measure equivalent to Lebesgue measure $\m$, and in particular using 
\eqref{bessel_1} we deduce that 
$$V(\tilde\Omega)\ge V(\Omega) - C_6\m(\Omega\setminus \tilde\Omega).$$
Finally we observe that $$|X(\tilde\Omega)|\le|X(\Omega)|+(\raggio+\delta_0)\m(\Omega\setminus \tilde\Omega),$$
and therefore

\begin{eqnarray*}
J_0(\tilde\Omega)+\Lambda_1\,|X(\tilde\Omega)|+ \Lambda_2\,|\m(\tilde\Omega)-\omega|&=& P(\tilde\Omega)-\gamma_\omega V(\tilde\Omega)+\Lambda_1\,|X(\tilde\Omega)|\\
%&\le& P(\Omega) + |P(\Omega\setminus E_{\bar t})-P(\Omega)|+ |P(\Omega\cup F_{\tilde t})-P(\Omega)|\\
%&&-\gamma_\omega V(\Omega)+ \gamma_\omega V(\Omega)+\Lambda_1\,|X(\tilde\Omega)|\\
& \le& P(\Omega)-\gamma_\omega V(\Omega)+\Lambda_1\,|X(\Omega)|+\Lambda_2\,|\m(\Omega)-\omega|\\
&&+(C_5+C_6 \gamma_\omega+(\raggio+\delta_0)\Lambda_1-\Lambda_2) \m(\Omega\setminus \tilde\Omega)\\
& <& J_0(\Omega)+\Lambda_1\,|X(\Omega)|+\Lambda_2\,|\m(\Omega)-\omega|\\
\end{eqnarray*}
provided $\Lambda_2$ is large enough. Observe that $\Lambda_2$ can be explicitly computed in terms of $n$, $\omega$, $\delta_0$ and $\Lambda_1$ alone. 

If now we assume that for some $\delta\in(0,\delta_0)$ a set $\Omega$ satisfies $$B_{\raggio-\delta}\subset\Omega\subset B_{\raggio+\delta}$$ and $$\m(\Omega)<\omega,$$ the previous arguments work as well provided $\tilde\Omega=\Omega\cup B_{\tilde \raggio}$ for some
$\tilde \raggio\in(\raggio-\delta,\raggio+\delta)$ such that $\m(\tilde\Omega)=\omega$.

\vskip .4cm
\noindent {\bf Claim 3.} Let $\delta_0$, $\Lambda_1$ and $\Lambda_2$ be the constants given in Claim 1 and Claim 2, then there exist a positive constant $\Lambda_3$, which depends on $\omega$ and $n$ only, such that, for every $0<\delta<\delta_0$,
the problem
\begin{equation}\label{J3}
\min_\Omega\{J_0(\Omega)+\Lambda_1|X(\Omega)|+\Lambda_2\,|\,\m(\Omega)-\omega|+\Lambda_3(V(\Omega\backslash B_{\raggio+\delta})+V(B_{\raggio-\delta}\backslash \Omega))\}
\end{equation}
admits minimizers in the class of the sets with finite measure and perimeter, and every minimizer is a minimizer for \eqref{J2}.
\vskip .3cm

%Since in this case one has to deal with minimizing sequences of possible unbounded sets compactness is ensured once we can replace any test set $\Omega$ with a
In the same spirit of the previous claims let us consider a set  $\Omega$ which for some $\delta\in(0,\delta_0)$ satisfies
$$V(\Omega\backslash B_{\raggio+\delta})+V(B_{\raggio-\delta}\backslash \Omega)>0.$$

We set $$\tilde\Omega=(\Omega\cap B_{\raggio+\delta})\cup B_{\raggio-\delta}.$$

If $s(t)$ is any smooth function such that 
\begin{itemize}
\item $0\le s(t)\le 1$ for all $t\ge 0$
\item $s(t)=1$ for $|t-1|\le\frac14$
\item $s(t)=0$ for $|t-1|\ge\frac12$
\item $\|s\|_{C^1}$ is bounded by a constant $K$
\end{itemize}
then
\begin{eqnarray*}
|P(\tilde \Omega)-P(\Omega)|&\le& \left|\int_{\Omega\setminus B_{\raggio+\delta}} {\rm div}\left(s\left(\frac{|x|}{\raggio}\right)|x|^{2-n}\,\I^2_{\frac{n}{2}-1}(|x|)\frac{x}{|x|}\right) dx \right|\\
&&+ \left|\int_{B_{\raggio-\delta}\setminus\Omega} {\rm div}\left(s\left(\frac{|x|}{\raggio}\right)|x|^{2-n}\,\I^2_{\frac{n}{2}-1}(|x|)\frac{x}{|x|}\right) dx \right|\\
&\le& C_7\left(\m({B_{\raggio-\delta}\setminus\Omega})+\m(\Omega\setminus B_{\raggio+\delta})\right).
\end{eqnarray*}
Here we have used that $s(|x|/\raggio)=1$ on $B_{\raggio+\delta}$ and $B_{\raggio-\delta}$ which is true provided $\delta_0 < \raggio/4$. An assumption that we can always effort without loss of generality.

Observe also that 
$$\m({B_{\raggio-\delta}\setminus\Omega})+\m(\Omega\setminus B_{\raggio+\delta})\le C_8 (V({B_{\raggio-\delta}\setminus\Omega})+V(\Omega\setminus B_{\raggio+\delta})),$$
and using  \eqref{bessel_1} and \eqref{bessel_2}
\begin{eqnarray*}
|X(\tilde\Omega)|-|X(\Omega)| &\le& \raggio\,\m({B_{\raggio-\delta}\setminus\Omega}) + \int_{\Omega\setminus B_{\raggio+\delta}} |x|\,dx \\
&\le& \raggio \m(\Omega\setminus B_{\raggio+\delta}) + C_9 \left|\int_{\Omega\setminus B_{\raggio+\delta}} |x|^{2-n}\,\I^2_{\frac{n}{2}-1}(|x|)  \, dx \right|\\
&\le& C_{10} (V({B_{\raggio-\delta}\setminus\Omega})+V(\Omega\setminus B_{\raggio+\delta})).
\end{eqnarray*}

Now we have
\begin{eqnarray*}
&&J_0(\tilde\Omega)+\Lambda_1\,|X(\tilde\Omega)|+ \Lambda_2\,|\m(\tilde\Omega)-\omega|+\Lambda_3(V(\tilde\Omega\backslash B_{\raggio+\delta})+V(B_{\raggio-\delta}\backslash \tilde\Omega))\\
&&\quad= P(\tilde\Omega)-\gamma_\omega V(\tilde\Omega)+\Lambda_1\,|X(\tilde\Omega)|+ \Lambda_2\,|\m(\tilde\Omega)-\omega|\\
%&\le& P(\Omega) + |P(\Omega\setminus E_{\bar t})-P(\Omega)|+ |P(\Omega\cup F_{\tilde t})-P(\Omega)|\\
%&&-\gamma_\omega V(\Omega)+ \gamma_\omega V(\Omega)+\Lambda_1\,|X(\tilde\Omega)|\\
&&\quad\le P(\Omega)-\gamma_\omega V(\Omega)+\Lambda_1\,|X(\Omega)|+\Lambda_2\,|\m(\Omega)-\omega|+\Lambda_3(V(\Omega\backslash B_{\raggio+\delta})+V(B_{\raggio-\delta}\backslash \Omega))\\
&&\qquad\quad+(C_7C_8+C_8\gamma_\omega+ \Lambda_1 C_{10}+C_8\Lambda_2-\Lambda_3) (V(\Omega\backslash B_{\raggio+\delta})+V(B_{\raggio-\delta}\backslash \Omega))\\
&&\quad< J_0(\Omega)+\Lambda_1\,|X(\Omega)|+\Lambda_2\,|\m(\Omega)-\omega|+\Lambda_3(V(\Omega\backslash B_{\raggio+\delta})+V(B_{\raggio-\delta}\backslash \Omega)),
\end{eqnarray*}
provided $\Lambda_3$ is large enough. Once again $\Lambda_3$ can be expressed in terms of $n$, $\omega$, $\delta_0$, $\Lambda_1$ and $\Lambda_2$.

Therefore, if a minimizer of \eqref{J3} exists then it is necessarily bounded. Since a minimizing sequence of equibounded sets for \eqref{J3} certainly exists, by compactness and semicontinuity existence of minimizers trivially follows. 

\vskip .4 cm
Summing up Claim 1, Claim 2 and Claim 3, %for given positive $n$ and $\omega$, 
it is therefore possible to choose positive constants $\delta_0$, $\Lambda_1$, $\Lambda_2$ and $\Lambda_3$,
in such a way that \eqref{J} admits minimizers whenever $0<\delta<\delta_0$ and every minimizer is also a minimizer of \eqref{constrained}. 

Finally for the same choice of the constants $\delta_0$, $\Lambda_1$, $\Lambda_2$ and $\Lambda_3$, all minimizers of \eqref{constrained} for some $0\le\delta<\delta_0$ are also a minimizers of \eqref{J}.\end{proof}

\begin{proposition}\label{p_unconstrained}
%For any given $n\in\N$ and $\omega>0$, let $\delta_0$, $\Lambda_1$, $\Lambda_2$, $\Lambda_3$ be the positive constants given in Proposition \ref{minima} and let, for every $0<\delta<\delta_0$,
% $\Omega_\delta$ be a minimizer of the functional $J$ defined in \eqref{J}.
%Then there exists $\delta$ such that $\Omega_\delta$ is a ball.

Let $\omega>0$ and let $\delta_0$, $\Lambda_1$, $\Lambda_2$, $\Lambda_3$ be the positive constants given in Proposition \ref{minima}. There exists $\delta$ with $0<\delta<\delta_0$ such that any minimizer in the class of the sets with finite measure and perimeter of the functional $J$ defined in \eqref{J} is a ball.
%For every $h>0$ there exists $\delta$  such that $\Omega_\delta$ is a \emph{$(\omega,h)$--spherical} set.
%Then there exists $\delta_0$, with $0<\delta_0\le\delta_0$, such that, for every $0<\delta<\delta_0$, $\Omega_\delta$ is a \emph{$(\omega,\ep_0)$--spherical} set.
\end{proposition}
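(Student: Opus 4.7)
The plan is to prove the proposition by contradiction, chaining together three ingredients: the equivalence from Proposition \ref{minima}, the regularity theory for almost-minimizers of the perimeter recalled in the introduction, and the quantitative rigidity for nearly spherical sets from Theorem \ref{teonearly}. Before starting the contradiction, I would observe that $B_\raggio$ is an admissible competitor making every term of $J$ vanish: since $\gamma_\omega=P(B_\raggio)/V(B_\raggio)$ by definition, we have $J_0(B_\raggio)=0$; the barycenter, volume, and tubular penalty terms all vanish because $B_{\raggio-\delta}\subset B_\raggio\subset B_{\raggio+\delta}$. Hence every minimizer $\Omega$ of $J$ satisfies $J(\Omega)\le 0$.

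Suppose the conclusion fails. Then there exist $\delta_k\to 0^+$ and minimizers $\Omega_k$ of the corresponding functional $J$ with $\Omega_k\ne B_\raggio$. By Proposition \ref{minima} each $\Omega_k$ also minimizes \eqref{constrained}, so $\m(\Omega_k)=\omega$, $X(\Omega_k)=0$, and $B_{\raggio-\delta_k}\subset\Omega_k\subset B_{\raggio+\delta_k}$; in particular $\Omega_k\to B_\raggio$ in $L^1$. The first technical step is to verify that the $\Omega_k$ are almost-minimizers of the Euclidean perimeter in the sense of \eqref{almminp} with constants $K,r_0$ that are uniform in $k$. This rests on two facts: the weight $|x|^{2-n}\I_{n/2-1}^2(|x|)$ is smooth and bounded above and below by positive constants on the annulus $B_{\raggio+\delta_0}\setminus B_{\raggio/2}$, so $P$ is locally comparable to the Euclidean perimeter under variations supported near $\partial\Omega_k$; and each of $V$, $|X(\cdot)|$, $|\m(\cdot)-\omega|$, $V(\cdot\setminus B_{\raggio+\delta})$, $V(B_{\raggio-\delta}\setminus\cdot)$ is Lipschitz under symmetric-difference perturbations, with Lipschitz constants depending only on $n,\omega$ and on $\Lambda_1,\Lambda_2,\Lambda_3$. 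Comparing $J(\Omega_k)\le J(\tilde\Omega_k)$ for a localized competitor $\tilde\Omega_k$ with $\Omega_k\dsim\tilde\Omega_k\subset\subset B_r(x_0)$ then yields \eqref{almminp} with constants independent of $k$.

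The standard regularity theory for almost-minimizers of the perimeter (see \cite{Ma,T1,T2} and the summary in the introduction) now applies: since $\Omega_k\to B_\raggio$ in $L^1$ and the $\Omega_k$ are uniform almost-minimizers, for $k$ large each $\Omega_k$ is nearly spherical in the sense of Definition \ref{def_nearly}, with polar representation $r_k(\xi)=\raggio(1+v_k(\xi))$ and $v_k\to 0$ in $C^{1,\alpha}(\Ss^{n-1})$. The volume constraint on $v_k$ comes from $\m(\Omega_k)=\omega$ and the barycenter constraint from $X(\Omega_k)=0$, so $\Omega_k\in (n,\omega,\ep_k)$--NS with $\ep_k\to 0$.

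For $k$ large enough that $\ep_k$ falls below the threshold $\ep_0(n,\omega)$ of Theorem \ref{teonearly}, that theorem yields
\begin{equation*}
\frac{V(\Omega^\sharp)}{P(\Omega^\sharp)}\ge\frac{V(\Omega_k)}{P(\Omega_k)}\left(1+K\int_{\Ss^{n-1}}v_k^2\,d\sigma_\xi\right),
\end{equation*}
which, after rearrangement and use of the fact that $V(\Omega_k)$ is uniformly bounded away from zero, gives $J_0(\Omega_k)\ge K'\|v_k\|_{L^2}^2$ for some positive $K'=K'(n,\omega)$. Since the remaining terms of $J$ are non-negative, $J(\Omega_k)\ge K'\|v_k\|_{L^2}^2\ge 0$, which combined with $J(\Omega_k)\le 0$ forces $v_k\equiv 0$ and $\Omega_k=B_\raggio$, contradicting $\Omega_k\ne B_\raggio$. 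The main obstacle in this plan is the uniform almost-minimality step: one must check that the constants in \eqref{almminp} do not degenerate as $\delta\to 0$, so that the $C^{1,\alpha}$ regularity and convergence needed to activate Theorem \ref{teonearly} can be extracted uniformly along the sequence.
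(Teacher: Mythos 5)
Your proposal is correct and follows essentially the same route as the paper: use Proposition \ref{minima} to transfer the constraints, establish uniform almost-minimality of the weighted (and hence Euclidean) perimeter with constants independent of $\delta$, invoke the $C^{1,1/2}$ regularity theory to show that as $\delta\to 0$ any minimizer is nearly spherical with $\|v\|_{W^{1,\infty}}$ small, and then apply Theorem \ref{teonearly} together with $J_0(B_\raggio)=0$ to force $v\equiv 0$. You spell out the final quantitative step ($J_0(\Omega)\ge K'\|v\|_{L^2}^2\ge 0$ versus $J(\Omega)\le 0$) a bit more explicitly than the paper does, and you frame the compactness argument as an explicit contradiction along a sequence $\delta_k\to 0$ rather than the paper's direct phrasing, but these are cosmetic differences and the mechanism is identical.
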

\begin{proof}
Let  $\Omega$ be a minimizer of the functional $J$ defined in \eqref{J} for a fixed value of $\delta$ with $0<\delta<\delta_0$. Our first aim is to prove that $\Omega$ is an almost-minimizer for the perimeter in the sense of \eqref{almminp}.
%, in the sense that there exist two positive constants $K$ and $r_0$, which depend only on $n$ and $\omega$, such that
%\begin{equation}\label{almminp}
% \per(\Omega)\le  \per(\tilde\Omega)+K r^n,
%\end{equation}
%whenever $\Omega\dsim\tilde\Omega\subset\subset B_r(x_0)$ and $0<r<r_0$ (see \cite{A,T1,T2}). We observe that it is always possible to assume $B_r(x_0)\subset B_{2\raggio}$, otherwise \eqref{almmin} is trivial.

Since $\Omega$  is a minimizer of $J$, in view of Proposition \ref{minima}, we have $J(\Omega)=J_0(\Omega)$, the barycenter of $\Omega$ is the origin, and $B_{\raggio-\delta}\subset\Omega\subset B_{\raggio+\delta}$. Let now $0<r_0<1$,  
 we consider a set $\tilde\Omega$ such that $\Omega\dsim\tilde\Omega\subset\subset B_r(x_0)$ for some and $0<r<r_0$.
If $|x_0|>\raggio+\delta+1$, then
$$P(\Omega)\le P(\tilde\Omega),$$
while if $|x_0|\le \raggio+\delta+1$, then
$$J(\Omega)\le J(\tilde\Omega)\le P(\tilde\Omega)-\gamma_\omega V(\Omega)+K_1 \m(B_r(x_0)),$$
for some $K_1$ which depends only on $n$ and $\omega$.
Therefore we can say that $\Omega$ is an almost-minimizer for the weighted perimeter $P(\cdot)$, in the sense that there exist two positive constants $K_1$ and $r_0$ , which depend only on $n$ and $\omega$, such that
\begin{equation}\label{almmin}
P(\Omega)\le P(\tilde\Omega)+K_1 r^n,
\end{equation}
whenever $\Omega\dsim\tilde\Omega\subset\subset B_r(x_0)$ and $0<r<r_0$.
Then, arguing as in \cite{FM}, observing that the weight $(|x|^{1-\frac n2}\I_{\frac{n}{2}-1}(|x|))^2$ which defines $P(\cdot)$ is locally a Lipschitz function, \eqref{almmin} implies \eqref{almminp}.
Using the results quoted in the introduction the reduced boundary $\partial^*\Omega$ is a $C^{1,1/2}$ hypersurface. Let us now consider a vanishing decreasing sequence of positive numbers $\delta_k$, $k\in\N$, with $\delta_k<\delta_0$, and let $\Omega_k$ be a sequence of minimizers for the functional $J$ with
$\delta=\delta_k$. Then $\Omega_k$ is a sequence of almost-minimizers for the Euclidean perimeter, with uniform constants $K$ and $r_0$, converging to $B_\raggio$ in $L^1$. It follows %(see \cite{A,T1,T2}) { (bisogna spiegare meglio magari enunciando il risultato)}
that there exists $\bar k\in\N$ such that, for $k>\bar k$,  
the boundary of $\Omega_k$
can be represented in polar coordinates as
\begin{equation}\label{regularity}
r_k(\xi)=\raggio(1+v_k(\xi)),
\end{equation}
where $v_k\rightarrow0$ in $C^{1,\alpha}(\Ss^{n-1})$ for all $\alpha\in(0,1/2)$.
{ Therefore, for $n\in N$ and $\omega>0$ let $\ep=\ep(n,\omega)$ be the positive constant given in Theorem \ref{teonearly}. In view of what we have proved so far, there exists $\bar\delta\in(0,\delta_0)$ such that for every $\delta\in(0,\bar\delta)$ any minimizer $\Omega$ of $J$ is a $(n,\omega,\ep)$-NS set. Employing Theorem \ref{teonearly} we deduce that $\Omega$ must be a ball.}

\end{proof}

\begin{proof}[Proof of Corollary \ref{corol}] In order to prove Corollary \ref{corol} it is enough to show that Conjecture \ref{conj2} is equivalent to Conjecture \ref{conj}, so that it follows from Theorem \ref{mainteo}. Here we only prove that Conjecture \ref{conj} implies Conjecture \ref{conj2} as the reverse follows using similar arguments.

Slightly modifying the notation given in the Introduction, for a given  open bounded Lipschitz set $\Omega$ we consider %$\lambda_R(\Omega)$ 
\begin{equation}\label{defini}
\lambda_{R,\alpha}(\Omega)=\mathop{\min_{w\in H^1(\Omega)}}_{ w\ne 0}  
\frac{\displaystyle\int_\Omega |Dw|^2\,dx+\alpha\displaystyle\int_{\partial\Omega} w^2\,d\h}{\displaystyle\int_{\Omega}w^2\, dx}, \end{equation}
with $\alpha<0$. Observing that $\lambda_{R,\alpha}(\Omega)$, as a function of $\alpha$, is Lipschitz continuous, monotone increasing from $]-\infty,0]$ onto $]-\infty,0]$, for every fixed $\alpha<0$ there exists $\bar\alpha<0$ such that:
\begin{equation}
\lambda_{R,\bar\alpha}(\Omega^\sharp)=\lambda_{R,\alpha}(\Omega).
\end{equation}
Our aim is to show that $\bar\alpha\le\alpha$.

Indeed, denoting by $u$ and $v$ two extremal functions in \eqref{defini} relative to $\Omega$ and $\Omega^\sharp$ , we have:
%\[
%\left\{
%\begin{array}{ll}
%-\Delta u=\lambda_{R,\alpha}(\Omega)u & \mbox{in $\Omega$}\\\\
%\dfrac{\partial u}{\partial \nu} +\alpha \,u=0 & \mbox{on $\partial\Omega$,}
%\end{array}
%\right.
%\qquad\left\{
%\begin{array}{ll}
%-\Delta v=\lambda_{R,\bar\alpha}(\Omega^\sharp)v & \mbox{in $\Omega^\sharp$}\\\\
%\dfrac{\partial v}{\partial \nu} +\alpha_1 \,v=0 & \mbox{on $\partial\Omega^\sharp$.}
%\end{array}
%\right.
%\]
\begin{equation*}
\int_\Omega |Du|^2\,dx+\alpha\displaystyle\int_{\partial\Omega} u^2\,d\h=\lambda_{R,\alpha}(\Omega)\displaystyle\int_{\Omega}u^2\, dx
\end{equation*}
\begin{equation*}
\int_\Omega |Dw|^2\,dx+\alpha\displaystyle\int_{\partial\Omega} w^2\,d\h\ge\lambda_{R,\alpha}(\Omega)\displaystyle\int_{\Omega}w^2\, dx\qquad\forall w\in H^1(\Omega)
\end{equation*}
and
\begin{equation*}
\int_{\Omega^\sharp} |Dv|^2\,dx+\bar\alpha\displaystyle\int_{\partial\Omega^\sharp} v^2\,d\h=\lambda_{R,\bar\alpha}(\Omega^\sharp)\displaystyle\int_{\Omega^\sharp}v^2\, dx
\end{equation*}
\begin{equation*}
\int_{\Omega^\sharp} |Dz|^2\,dx+\alpha\displaystyle\int_{\partial\Omega^\sharp} z^2\,d\h\ge\lambda_{R,\bar\alpha}(\Omega^\sharp)\displaystyle\int_{\Omega^\sharp}z^2\, dx\qquad\forall z\in H^1(\Omega^\sharp)
\end{equation*}
By a rescaling with $\kappa=\sqrt{|\lambda_{R,\alpha}(\Omega)|}$ we have
%\begin{equation}
%\int_{\kappa\Omega} |Du|^2\,dx+\int_{\kappa\Omega}u^2\, dx=\frac{|\alpha|}\kappa\displaystyle\int_{\partial(\kappa\Omega)} u^2\,d\h\displaystyle
%\end{equation}
\begin{equation}\label{e1}
\int_{\kappa\Omega} |Dw|^2\,dx+\int_{\kappa\Omega}w^2\, dx\ge\frac{|\alpha|}\kappa\int_{\partial(\kappa\Omega)} w^2\,d\h\qquad\forall w\in H^1(\kappa\Omega)
\end{equation}
and
%\begin{equation}
%\int_{\Omega^\sharp} |Dv|^2\,dx+\bar\alpha\displaystyle\int_{\partial\Omega^\sharp} v^2\,d\h=\lambda_{R,\bar\alpha}(\Omega^\sharp)\displaystyle\int_{\Omega^\sharp}v^2\, dx
%\end{equation}
\begin{equation}\label{e2}
\int_{\kappa\Omega^\sharp} |Dz|^2\,dx+\int_{\kappa\Omega^\sharp}z^2\, dx\ge\frac{|\bar\alpha|}\kappa\int_{\partial(\kappa\Omega^\sharp)} z^2\,d\h\qquad\forall z\in H^1(\kappa\Omega^\sharp)
\end{equation}
with equality holding in \eqref{e1} and \eqref{e2} for $w(x)=u(x/\kappa)$ and $z(x)=v(x/\kappa)$. It follows that
$$\lambda(\kappa\Omega)=\frac{|\alpha|}\kappa\qquad\text{and}\qquad\lambda(\kappa\Omega^\sharp)=\frac{|\bar\alpha|}\kappa$$
and then using Conjecture \ref{conj} we have
$\bar\alpha\le\alpha$.
\end{proof}

\centerline{\bf Acknowledgements}
The authors wish to thank F. Brock, D. Bucur and N. Fusco for their suggestions and for many fruitful discussions. %This work was partially supported by: F.A.R.O. 2012 "Metodi matematici per la modellizzazione di fenomeni naturali".

\end{document}